\newtheorem{thm}{Theorem}[section]
\newtheorem{lemma}[thm]{Lemma}
\newtheorem{prop}[thm]{Proposition}
\newtheorem{conj}[thm]{Conjecture}
\theoremstyle{definition}
\newtheorem{example}[thm]{Example}
\newtheorem{defn}[thm]{Definition}
\numberwithin{equation}{section}
\newcommand{\cD}{{\mathcal D}}
\DeclareMathOperator\trace{tr}
\DeclareMathOperator{\stair}{Stair}
\DeclareMathOperator{\cycletype}{cycletype}
\DeclareMathOperator{\ch}{ch}
\newcommand{\sstrand}[1]{{\color{red} \big|^{#1} }}
\begin{document}

\mbox{}
\title{The Stanley-Stembridge Conjecture for {$\bf 2 + 1 +1$}-avoiding unit interval orders: a diagrammatic proof}

\author[McDonough]{Joseph McDonough}
\address{School of Mathematics\\University of Minnesota\\Minneapolis, MN 55455}
\email{mcdo1248@umn.edu}

\author[Pylyavskyy]{Pavlo Pylyavskyy}
\address{School of Mathematics\\University of Minnesota\\Minneapolis, MN 55455}
\email{ppylyavs@umn.edu}

\author[Wang]{Shiyun Wang}
\address{School of Mathematics\\University of Minnesota\\Minneapolis, MN 55455}
\email{wang8406@umn.edu}
\date{\today}

\begingroup
\def\uppercasenonmath#1{} 
\let\MakeUppercase\relax 
\maketitle
\endgroup
\markleft{McDonough, Pylyavskyy, and Wang}

\begin{abstract}
A natural unit interval order is a naturally labelled partially ordered set that avoids patterns {${\bf 3} + {\bf 1}$} and {$\bf 2 + 2$}. To each natural unit interval order one can associate a symmetric function. The Stanley-Stembridge conjecture states that each such symmetric function is positive in the basis of complete homogenous symmetric functions. This conjecture has connections to cohomology rings of Hessenberg varieties, and to Kazhdan-Lusztig theory. We use a diagrammatic technique to re-prove the special case of the conjecture for unit interval orders additionally avoiding pattern {$\bf 2 + 1 + 1$}. Originally this special case is due to Gebhard and Sagan. 
\end{abstract}


\section{Introduction}
\label{sec:intro}

In 1993 Stanley and Stembridge formulated one of the deepest conjectures in modern algebraic combinatorics, see \cite[Conjecture 5.5]{STANLEY1993261}. The conjecture says that certain symmetric functions associated to {${\bf 3} + {\bf 1}$}-free posets have a positive expansion when expressed in the basis of elementary symmetric functions. Guay-Paquet \cite{guaypaquet2013modular} has shown that it is sufficient to prove the Stanley-Stembridge conjecture for natural unit interval orders, i.e. {${\bf 3} + {\bf 1}$} and {$\bf 2 + 2$} avoiding posets. Several connections are known between the Stanley-Stembridge conjecture and geometry. One such connection was realized by Shareshian and Wachs \cite{SHARESHIAN2016497}, who observed that essentially the same symmetric functions arise as Frobenius characters of actions of symmetric groups on cohomology rings of Hessenberg varieties, as studied by Tymoczko \cite{ty08}. Shareshian-Wachs' conjecture was proved by Brosnan and Chow \cite{bc18}, and independently by Guay-Paquet \cite{g16}. A connection to Kazhdan-Lusztig theory is due to Haiman, whose conjecture \cite[Conjecture 2.1]{haiman1993hecke} about characters of Kazhdan-Lusztig basis elements would imply the Stanley-Stembridge conjecture. We refer the reader to the works of Abreu and Nigro \cite{abreu2022update, abreu2022parabolic} for the recent development this connection.

Gasharov \cite{gas96} proved the (weaker) Schur-positivity version of the conjecture. Further refinement of this Schur positivity was conjectured by Kim-Pylyavskyy \cite{kim21} and proved by Blasiak-Eriksson-Pylyavskyy-Siegl \cite{blasiak22}. 
Many further connections and partial results are known. The conjecture was related to Macdonald polynomials \cite{Macdonald} by Haglund-Wilson, and to LLT polynomials \cite{ALEXANDERSSON20183453} by Alexandersson and Panova. 
Some special cases of the Stanley-Stembridge conjecture have been solved (see Gebhard-Sagan \cite{gs01}, Dahlberg-van Willigenburg \cite{dw18}, Harada-Precup \cite{hp19}, Cho-Huh \cite{ch19_2}, Huh-Nam-Yoo \cite{HUH2020111728}, Cho-Hong \cite{Cho2022PositivityOC}, Abreu-Nigro \cite{article}, Wang \cite{wang22}, Wolfgang \cite{wolfgang97}, Clearman-Hyatt-Shelton-Skandera \cite{clearman13}, Hwang \cite{hwang22}), but it remains open in full generality. In particular, Gebhard-Sagan were the first to prove the Stanley-Stembridge Conjecture for {$\bf 2 + 1 +1$}-avoiding unit interval orders in \cite[Corollary 7.7]{gs01}, where they defined the corresponding incomparability graphs as $K_\alpha$-chains.  

The main idea of this paper is that a certain version of diagrammatic calculus can be used to approach the conjecture. The flavor of the diagrammatic calculus we use is perhaps similar to that of Jones' planar algebras \cite{jones2021planar} and especially Cvitanovic's bird tracks \cite{cvitanovic2008group}, see also \cite{elvang2005diagrammatic}. The origins of such techniques go back to Clebsch \cite{clebsch1872theorie} and Young \cite{grace1903algebra}, and we refer the reader to \cite{abdesselam2012volume} for a nice historical exposition. A recent triumph of the diagrammatic approach is the proof by Elias and Williamson  of positivity of Kazhdan-Lusztig polynomials \cite{elias2016soergel}. 

In our case, we consider certain strand diagrams, colorings of which yield the desired symmetric functions. We  extend the category of strand diagram to that of weighted strand diagrams, and allow elements of the ring of symmetric functions $\Lambda$ as coefficients. Let $D$ be an arbitrary strand diagram, and let $D^j$ be a certain weighted strand diagram associated with it. We consider $\Lambda$-linear combinations of the form: \[\partial_k D:=h_k D+h_{k-1} D^1+h_{k-2} D^2+\cdots+h_0 D^k.\] It turns out that in the special case of {$\bf 2 + 1 + 1$} avoiding unit interval orders, the set of all relevant $\partial_k D$-s has the following lucky properties. 
\begin{itemize}
 \item Partial traces of the $\partial_k D$-s express positively in terms of $\partial_k D$-s {for smaller D's}.
 \item Tracing out $\partial_k D$-s for the single-strand $D$ yields $h$-positive expressions. 
\end{itemize}

The argument then proceeds as follows. One can obtain the symmetric functions in the Stanley-Stembridge conjecture by tracing out certain specific strand diagrams. Instead of taking the whole trace at once, one can take a partial trace on one or several strands at a time. The key property is that this tracing out can be done in a way such that on each step we obtain an $\partial_k D$-positive expression, in turn each of those $\partial_k D$-s further traces out into $\partial_k D$-positive expressions, etc. 

The paper is organized as follows. In Section \ref{sec:background} we give the notations and definitions on related objects and state necessary propositions to understand later content. In Section \ref{action} we define the trace of strand diagrams discussed in Section \ref{stranddiagram}, which plays a crucial role in proving the $h$-positivity of the associated symmetric functions of {$\bf 2 + 1 +1$}-avoiding unit interval orders. The proof of the main result (Theorem \ref{hpos for 211}) is in Section \ref{sec:results}.

\section{Background}
\label{sec:background}

\subsection{Symmetric functions}
A function $f(x_1,x_2,\ldots)\in \mathbb{Q}[[x_1,x_2,\ldots]]$ is {\em symmetric} if $f(x_1,x_2,\ldots)=f(x_{\sigma(1)},x_{\sigma(2)},\ldots)$ for all permutations $\sigma$ of the positive integers $\mathbb{P}$. The set of all symmetric functions forms a graded $\mathbb{Q}$-algebra $\Lambda = \bigoplus_{n} \Lambda_n$, where functions in each $\Lambda_n$ have degree $n$, and $\Lambda_n$ forms a $\mathbb{Q}$-vector space.

The pertinent bases in this paper include the {\em elementary}, {\em complete homogeneous}, and {\em power sum} bases indexed by partitions $\lambda\vdash n$, denoted by $e_\lambda, h_\lambda, p_\lambda$ respectively. Following \cite{stanley_fomin_1999}, we present some essential facts about these bases, which we use extensively in later sections.

\begin{prop}\label{n!h_n}
Let $S_n$ be the symmetric group of degree $n$. Then we have
     \[n!h_n=\sum_{\sigma\in S_n}p_{\cycletype(\sigma)}.\]
\end{prop}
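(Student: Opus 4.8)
The plan is to prove the identity $n! \, h_n = \sum_{\sigma \in S_n} p_{\cycletype(\sigma)}$ by interpreting both sides through the standard change-of-basis coefficients between the power-sum and complete homogeneous bases of $\Lambda$. First I would recall the well-known generating function for the complete homogeneous symmetric functions, $\sum_{n \geq 0} h_n t^n = \prod_{i \geq 1} (1 - x_i t)^{-1}$, and take the logarithm to obtain $\log \prod_{i \geq 1} (1 - x_i t)^{-1} = \sum_{k \geq 1} p_k t^k / k$, where $p_k = \sum_i x_i^k$ is the $k$-th power sum. Exponentiating and expanding the resulting exponential as a product of exponentials, one arrives at the classical relation expressing $h_n$ as a sum over partitions $\lambda \vdash n$ of $p_\lambda / z_\lambda$, where $z_\lambda = \prod_i i^{m_i} m_i!$ and $m_i$ is the number of parts of $\lambda$ equal to $i$.

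The key combinatorial step is then to recognize $z_\lambda$ as exactly the size of the centralizer of any permutation of cycle type $\lambda$ in $S_n$. Consequently, the number of permutations $\sigma \in S_n$ with $\cycletype(\sigma) = \lambda$ equals $n! / z_\lambda$, since the conjugacy class of such $\sigma$ has that cardinality by the orbit-stabilizer theorem. Substituting this count into the expansion, I would write
\[
\sum_{\sigma \in S_n} p_{\cycletype(\sigma)} = \sum_{\lambda \vdash n} \frac{n!}{z_\lambda} \, p_\lambda = n! \sum_{\lambda \vdash n} \frac{p_\lambda}{z_\lambda} = n! \, h_n,
\]
where the first equality groups the sum over permutations by conjugacy class, and the last equality is precisely the expansion of $h_n$ derived above.

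An alternative route, which avoids the generating-function manipulation, is to reverse the logic: start from the known expansion $h_n = \sum_{\lambda \vdash n} p_\lambda / z_\lambda$ (a standard fact found in \cite{stanley_fomin_1999}) and simply multiply through by $n!$, then reorganize the right-hand side as a sum over individual permutations using the cycle-type partition of $S_n$. Either way the logical content is the same.

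The main obstacle, or rather the one point demanding genuine care, is verifying the centralizer-size formula $|\{\sigma : \cycletype(\sigma) = \lambda\}| = n! / z_\lambda$, i.e.\ correctly accounting for the $\prod_i i^{m_i}$ factor (cyclic rotations within each cycle) and the $\prod_i m_i!$ factor (permutations among cycles of equal length). If I instead lean on the generating-function derivation of the $h_n$ expansion, the delicate step becomes the bookkeeping when exponentiating $\exp\!\left(\sum_{k} p_k t^k / k\right)$ and matching the coefficient of $t^n$ to a sum over partitions weighted by $1/z_\lambda$. Neither step is deep, but both are the kind of routine combinatorial accounting where sign or factorial errors most easily creep in, so I would state the centralizer formula explicitly and cite it rather than leave it implicit.
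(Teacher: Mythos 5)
Your proof is correct and takes essentially the same route as the paper's: both rest on the classical expansion $h_n = \sum_{\lambda \vdash n} p_\lambda / z_\lambda$ together with the fact that $n!/z_\lambda$ counts the permutations of cycle type $\lambda$, then multiply through by $n!$ and regroup the sum by conjugacy class. The paper simply cites the expansion rather than deriving it from generating functions, and leaves the centralizer count implicit, which you spell out.
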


\begin{proof}
    The power sum expansion of homogeneous symmetric functions can be written as \[h_n=\sum_{\lambda\vdash n} \frac{1}{z_\lambda}p_\lambda,\] where the sum is ranging over all partitions $\lambda$ of $n$, and $z_\lambda=\prod_{i=1}^n i^{d_i}d_i!$ with $d_i=$ the number of $i$'s in $\lambda.$ Then we immediately have \[n!h_n=\sum_{\lambda\vdash n} \frac{n!}{z_\lambda}p_\lambda=\sum_{\sigma\in S_n}p_{\cycletype(\sigma)}.\]
\end{proof}

\begin{prop}\label{ih_i}
    For any $i\geq 1$, we have \[ih_i=\sum_{j=1}^ih_{i-j}p_j.\]
\end{prop}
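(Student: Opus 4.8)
The plan is to establish the identity via generating functions, exploiting the logarithmic-derivative relation between the complete homogeneous and power-sum generating series. First I would introduce the generating functions
\[ H(t) = \sum_{n \geq 0} h_n t^n = \prod_i \frac{1}{1 - x_i t}, \qquad P(t) = \sum_{j \geq 1} p_j t^{j-1} = \sum_i \frac{x_i}{1 - x_i t}, \]
both regarded as formal power series in $t$ with coefficients in $\Lambda$.

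The key step is to observe that $P(t)$ is the logarithmic derivative of $H(t)$: differentiating $\log H(t) = -\sum_i \log(1 - x_i t)$ term by term gives $H'(t)/H(t) = \sum_i x_i/(1 - x_i t) = P(t)$, hence the functional equation $H'(t) = P(t) H(t)$. I would then extract the coefficient of $t^{i-1}$ from both sides. On the left, $H'(t) = \sum_{n \geq 1} n h_n t^{n-1}$ contributes $i h_i$. On the right, the Cauchy product $P(t) H(t) = \sum_{j \geq 1} \sum_{m \geq 0} p_j h_m t^{j-1+m}$ contributes $\sum_{j=1}^i p_j h_{i-j}$, where the upper limit $j \le i$ comes from the constraint $m = i - j \geq 0$. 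Equating the two yields $i h_i = \sum_{j=1}^i h_{i-j} p_j$, as desired.

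The only point requiring genuine care is the index bookkeeping when extracting coefficients from the Cauchy product, together with the formal justification that term-by-term differentiation of the logarithm is valid in the ring of formal power series over $\Lambda$; neither is a real obstacle, since everything takes place in a graded completion where these manipulations are standard.

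As an alternative entirely internal to this paper, one could instead deduce the identity from Proposition \ref{n!h_n}. Starting from $i! h_i = \sum_{\sigma \in S_i} p_{\cycletype(\sigma)}$, I would group the permutations of $S_i$ according to the length $j$ of the cycle containing a fixed element. There are $\binom{i-1}{j-1}(j-1)! = (i-1)!/(i-j)!$ such cycles, and the remaining $i-j$ points are permuted freely, contributing $\sum_{\tau \in S_{i-j}} p_{\cycletype(\tau)} = (i-j)!\, h_{i-j}$, again by Proposition \ref{n!h_n}. Summing over $j$ gives $i! h_i = \sum_{j=1}^i (i-1)!\, p_j h_{i-j}$, and dividing by $(i-1)!$ recovers the claim; here the only subtlety is verifying the combinatorial decomposition and the accompanying factorial arithmetic.
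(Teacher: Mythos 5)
Your primary argument is exactly the paper's proof: both establish $H'(t) = P(t)H(t)$ via the logarithmic derivative of $H(t) = \prod_i (1-x_it)^{-1}$ and compare coefficients of $t^{i-1}$ (the paper writes $P(t)$ for $\log H(t)$ and uses its derivative, but this is only a notational difference). Your alternative argument via Proposition \ref{n!h_n}, grouping permutations of $S_i$ by the length of the cycle containing a fixed element, is also correct and is a nice purely combinatorial route the paper does not take, but since your main proof coincides with the paper's, no further comparison is needed.
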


\begin{proof}
    We can define the generating function of $\{h_i\}_{i\geq 0}$ as \[H(t):=\sum_{i=0}^nh_i(x_1,\ldots, x_n)t^i=\prod_{i=1}^n(1-x_it)^{-1}.\]
Let $P(t)$ be the generating function of $\{\frac{p_i}{i}\}_{i\geq 1}$. It is easy to see that \[P(t):=\sum_{i=1}^n\frac{p_i}{i}t^i=\log H(t).\]
Taking derivatives we have $P'(t)=H'(t)/H(t)$. Alternatively, $H'(t)=P'(t)H(t)$. Note that $P'(t)$ is the generating function of $\{p_i\}_{i\geq 1}$ with $p_i$ being the coefficient of $t^{i-1}$. Comparing the coefficients of $t^{i-1}$ on both sides gives the desired identity.
\end{proof}
If a symmetric function $f$ can be written as a nonnegative linear combination of the elementary (resp. complete homogeneous) bases, then we say $f$ is $e$-positive (resp. $h$-positive).

\subsection{Natural unit interval orders}\label{unitio}
There are different ways to define a natural unit interval order and these definitions can be proved to be equivalent (See \cite{kim21}, \cite{SHARESHIAN2016497}). Here we introduce the most relevant one and some helpful characterizations of unit interval orders.

\begin{defn}\cite{kim21}
    Let $P=(P, \prec)$ be a partial order on $[n]$, and assume the usual order ($<$) is a linearization of $P$. 
    $P$ is a {\em natural unit interval order} if the following property holds: 
    for any $a,b,c\in P$ such that $b\prec c$, and $a$ is incomparable to $b$ and $c$, it is true that $b<a<c$.
\end{defn}

Denote by ${\bf a}_1 + {\bf a}_2 + \cdots + {\bf a}_k$ a poset consisting of $a_1,a_2,\ldots, a_k$-chains with elements of different chains incomparable to each other. 
We say $P$ is an ${\bf a}_1 + {\bf a}_2 + \cdots + {\bf a}_k$-avoiding partial order if $P$ does not contain an induced suborder isomorphic to ${\bf a}_1 + {\bf a}_2 + \cdots + {\bf a}_k$. 

\begin{prop} \cite{Scott_Suppes_1958}
$P$ is a natural unit interval order if and only if it is a ${\bf 3} + {\bf 1}$ and ${\bf 2} + {\bf 2}$-avoiding naturally labelled poset.
\end{prop}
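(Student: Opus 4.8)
The plan is to prove the equivalence by showing each direction of the biconditional, working from the definition of natural unit interval order given just above. Recall the defining property: for all $a,b,c \in P$ with $b \prec c$ and $a$ incomparable to both $b$ and $c$, one must have $b < a < c$ in the usual order on $[n]$. I would show that a naturally labelled poset satisfies this property if and only if it avoids both ${\bf 3} + {\bf 1}$ and ${\bf 2} + {\bf 2}$ as induced suborders.

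First I would prove the forward direction (natural unit interval order $\Rightarrow$ pattern-avoiding) by contrapositive, which is the cleaner approach. Suppose $P$ contains an induced ${\bf 2} + {\bf 2}$, say a copy with $b \prec c$ and $b' \prec c'$ where the two chains are mutually incomparable. Since $a := b'$ is incomparable to both $b$ and $c$, the defining property forces $b < b' < c$; symmetrically, applying the property with $a := c'$, $a := b$, and $a := c$ in the other relations yields a chain of strict inequalities $b < b' < c$ and $b' < b < c'$, which I would push to an outright contradiction (the inequalities cannot all hold simultaneously). A similar argument handles an induced ${\bf 3} + {\bf 1}$: given a $3$-chain $b \prec d \prec c$ together with an isolated element $a$ incomparable to all three, the property applied to the pair $b \prec c$ forces $b < a < c$, while applying it to $b \prec d$ and $d \prec c$ pins $a$ on both sides of $d$, again yielding a contradiction. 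Thus a natural unit interval order avoids both patterns.

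For the reverse direction (pattern-avoiding $\Rightarrow$ natural unit interval order), I would argue directly. Assume $P$ is naturally labelled and avoids ${\bf 3} + {\bf 1}$ and ${\bf 2} + {\bf 2}$, and take $a, b, c$ with $b \prec c$ and $a$ incomparable to both. The goal is $b < a < c$. Since the labelling is natural (the usual order $<$ linearizes $\prec$) and $b \prec c$ gives $b < c$, I need to rule out $a < b$ and $a > c$. If $a < b$: since $a$ is incomparable to $b$ there must exist some witness element forcing a forbidden pattern; here I would locate an element comparable appropriately to produce either an induced ${\bf 2} + {\bf 2}$ (pairing $a$ with a predecessor of $b$) or a ${\bf 3} + {\bf 1}$, using naturality to control the label inequalities. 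The symmetric case $a > c$ is handled identically. The technical heart is constructing the witnessing elements, for which I would use the interaction between the linear order and the incomparabilities together with the avoidance hypotheses.

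The main obstacle I expect is the reverse direction, specifically ensuring that the forbidden induced suborders actually materialize rather than merely being "almost present." Avoiding a pattern as an \emph{induced} suborder is a subtle condition, so when I claim that $a < b$ forces a copy of ${\bf 2} + {\bf 2}$ or ${\bf 3} + {\bf 1}$, I must verify that the four (or four) chosen elements have exactly the comparability relations of the pattern and no extra ones. This bookkeeping — guaranteeing incomparabilities hold where required and comparabilities hold elsewhere — is where the argument is delicate, and I would lean on the transitivity of $\prec$ together with the naturality of the labelling to pin down each relation. Since this proposition is cited to Scott and Suppes, I would expect the cleanest writeup to mostly reference their classical argument while recording the contrapositive computations above as the key verifications.
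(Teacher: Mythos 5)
The paper gives no proof of this proposition at all --- it is cited to Scott and Suppes --- so your attempt stands on its own merits. Your forward direction is correct: applying the defining property inside an induced ${\bf 2}+{\bf 2}$ (with $a:=b'$ on the relation $b\prec c$, and $a:=b$ on the relation $b'\prec c'$) yields $b<b'$ and $b'<b$ simultaneously, and inside a ${\bf 3}+{\bf 1}$ the property applied to $b\prec d$ and to $d\prec c$ pins $a$ both below and above $d$; both contradictions are genuine.

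The reverse direction, however, has a real gap, and it is not merely the ``delicate bookkeeping'' you anticipate: the statement you set out to prove there is false. Take the poset on $[3]$ whose only relation is $2\prec 3$, with $1$ incomparable to both $2$ and $3$. It is naturally labelled (the one relation respects $<$) and it trivially avoids both ${\bf 3}+{\bf 1}$ and ${\bf 2}+{\bf 2}$, since each pattern needs four elements; yet the defining property fails, because $1$ is incomparable to $2$ and $3$ but $2<1<3$ does not hold. So no construction of ``witness elements'' from the hypothesis $a<b$ can ever produce a forbidden pattern --- in this example $a<b$ holds and no pattern exists. The proposition must be read up to relabelling: a poset avoids the two patterns if and only if it admits \emph{some} natural labelling satisfying the defining property (this matches how the paper treats the analogous Shareshian-Wachs statement, which is phrased via an isomorphism $P\simeq P(\lambda)$). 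Consequently the reverse direction is a construction, not a verification: given a ${\bf 3}+{\bf 1}$- and ${\bf 2}+{\bf 2}$-free poset one must build a suitable linear extension. The standard route (essentially Scott-Suppes) is to note that ${\bf 2}+{\bf 2}$-freeness forces the strict down-sets $D(x)=\{z : z\prec x\}$ (and dually the strict up-sets) to be totally ordered by inclusion, and ${\bf 3}+{\bf 1}$-freeness makes the down-set order and the reversed up-set order compatible, so they can be merged into a single linear order; labelling $[n]$ along that order gives the required property. Your forward direction can be kept verbatim, but the reverse direction needs to be replaced by such a construction.
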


Shareshian-Wachs \cite{SHARESHIAN2016497} provided another crucial equivalent definition of this family of partial orders. For $n\in \mathbb{P}$, let $\stair (n)$ be the staircase partition $(n-1,n-2,\cdots,2,1)$, and let $\lambda$ be a partition contained in $\stair (n)$. We can obtain a unit interval order $P(\lambda)$ associated with $\lambda$ such that $a\prec b$ iff $a\leq \lambda_{n+1-b}$. This gives a bijection between natural unit interval orders and $\lambda \subset \stair(n)$.

\begin{prop}\cite{SHARESHIAN2016497}
Let $P$ be a partial order on $[n]$, then $P$ is a natural unit interval order if and only if $P\simeq P(\lambda)$ for some associated $\lambda\subset \stair(n)$.
\end{prop}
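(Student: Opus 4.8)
The plan is to construct explicit mutually inverse maps $\Phi$ and $\Psi$ between natural unit interval orders on $[n]$ and partitions $\lambda\subset\stair(n)$. In one direction I take $\Phi(\lambda):=P(\lambda)$ as defined above; in the other, given a natural unit interval order $P$, I let $\Psi(P)$ record for each $b\in[n]$ the principal down-set $D(b):=\{a : a\prec b\}$ and set $\lambda_{n+1-b}:=\max D(b)$, with the convention $\max\emptyset=0$. The proposition then amounts to checking that both maps are well defined and mutually inverse.

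First I would verify that $P(\lambda)$ really is a natural unit interval order. Irreflexivity, transitivity, and the fact that $<$ linearizes $P(\lambda)$ all follow from the staircase bound $\lambda_{n+1-b}\le b-1$ forced by $\lambda\subset\stair(n)$. To confirm the defining betweenness property, I take $b\prec c$ and $a$ incomparable to both $b$ and $c$, and rule out the two alternatives to $b<a<c$ directly: if $a<b$ then $a<b\le\lambda_{n+1-c}$ forces $a\prec c$, while if $a>c$ then $b\le\lambda_{n+1-c}\le\lambda_{n+1-a}$ (using that $\lambda$ is weakly decreasing) forces $b\prec a$, each contradicting incomparability. This leaves $b<a<c$, as required.

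The heart of the argument — and the step I expect to require the most care — is showing that the reverse map is well defined, i.e. that the sequence extracted from $P$ is genuinely a partition contained in $\stair(n)$. Two structural facts must be established, both as clean instances of the single betweenness axiom. The first is that each down-set $D(b)$ is an initial interval $\{1,\dots,\lambda_{n+1-b}\}$: if $a\prec b$ but some $a'<a$ had $a'\notin D(b)$, then transitivity forces $a'$ incomparable to $a$, and $a'$ is incomparable to $b$ by assumption, so betweenness applied to $a\prec b$ with the element $a'$ yields $a<a'$, a contradiction. The second is monotonicity of $b\mapsto\max D(b)$: if $a\prec b$ and $b<b'$ but $a\not\prec b'$, then $b$ and $b'$ are incomparable (else transitivity gives $a\prec b'$), and betweenness applied to $a\prec b$ with the element $b'$ forces $b'<b$, again a contradiction. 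Monotonicity shows $\lambda$ is weakly decreasing, while $\lambda_{n+1-b}=\max D(b)\le b-1$ gives $\lambda\subset\stair(n)$.

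Finally I would confirm that $\Phi$ and $\Psi$ are mutually inverse. The identity $\Psi\circ\Phi=\mathrm{id}$ is immediate, since in $P(\lambda)$ the down-set of $b$ is exactly $\{1,\dots,\lambda_{n+1-b}\}$, so its maximum recovers $\lambda_{n+1-b}$; and $\Phi\circ\Psi=\mathrm{id}$ is precisely the initial-interval statement, which says $a\prec b$ in $P$ if and only if $a\le\max D(b)$. The main obstacle is not any hard estimate but rather isolating the two structural lemmas above and recognizing that both reduce to the single betweenness property; once they are in hand, the bijection is forced.
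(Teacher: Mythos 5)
The paper gives no proof of this proposition at all --- it is quoted from Shareshian--Wachs \cite{SHARESHIAN2016497} as a known result --- so there is no internal argument to compare yours against; your proposal supplies a correct, self-contained proof. The overall structure is sound: the forward map $\Phi(\lambda)=P(\lambda)$ is a natural unit interval order (irreflexivity, transitivity, and natural labelling all come from the staircase bound $\lambda_{n+1-b}\le b-1$, and your two case eliminations for betweenness are right), and the two structural lemmas for $\Psi$ --- that each down-set $D(b)$ is an initial interval and that $D(b)\subseteq D(b')$ when $b<b'$ --- are exactly what is needed, and both do reduce to the betweenness axiom as you claim. Two small points deserve explicit mention. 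First, in the initial-interval step you say $a'$ is ``incomparable to $b$ by assumption,'' but the assumption only gives $a'\not\prec b$; you also need $b\not\prec a'$, which follows since $<$ linearizes $P$ and $a'<a<b$. The same issue recurs in the monotonicity step, where betweenness requires $b'$ to be incomparable to $a$ as well as to $b$: you never verify $b'\not\prec a$, which again follows from the linearization since $a<b<b'$. These are one-line fixes, but a referee would ask for them. Second, you actually prove the sharper statement $P=P(\lambda)$ rather than $P\simeq P(\lambda)$; this is the right reading of the proposition, since the property of being a \emph{natural} unit interval order is not invariant under abstract poset isomorphism (it requires the usual order $<$ to linearize $P$), so proving equality of naturally labelled posets is both cleaner and what the bijection with partitions $\lambda\subset\stair(n)$ genuinely asserts.
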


\begin{defn}
The {\em shape} of a partition $\lambda \subset \stair(n)$ is a Young diagram associated with $\lambda$ drawn in the south-west corner of an $n \times n$ square. A {\em corner} of the shape is one of the north-east inner corners of the Young diagram.
\end{defn}

\begin{example}
    Figure \ref{2+1+1-avoiding} shows a $6\times 6$ square with the partition $\lambda=(4,3,1,1)$ contained in $\stair(6)$, and the associated natural unit interval order $P(4,3,1,1)=\big\{1\prec \{3,4,5,6\},2\prec \{5,6\}, 3\prec \{5,6\}, 4\prec 6\big\}$. The shaded yellow area represents the Young diagram of $\lambda$, and the three dotted cells are corners of the shape of $\lambda$.
\end{example}

\begin{figure}[h]
    \centering
    \vspace{3mm}
\begin{tikzpicture}[every node/.style={minimum size=.5cm-\pgflinewidth, outer sep=0pt}]
\draw[step=0.5cm] (-1.5,-1.5) grid (1.5,1.5);
\draw (-1.5,1.5) -- (1.5,-1.5);
\draw [dashed] (-1.5,1) -- (1,-1.5);
\draw [green, thick] (-1.5,1.5)--(-1.5,1)--(-1,1)--(-1,.5)--(-.5,.5)--(-.5,0)--(0,0)--(0,-.5)--(.5,-.5)--(.5,-1)--(1,-1)--(1,-1.5)--(1.5,-1.5);
\draw [blue, thick] (-1.5,1)--(-1.5,.5)--(-1,.5)--(-1,0)--(-.5,0)--(-.5,-.5)--(0,-.5)--(0,-1)--(.5,-1)--(.5,-1.5)--(1,-1.5);
    \node[fill=yellow!20] at (-1.25,.25) {};
    \fill[black] (-1.25,0.25) circle(.09);
    \node[fill=yellow!20] at (-1.25,-.25) {};
    \node[fill=yellow!20] at (-1.25,-.75) {};
    \node[fill=yellow!20] at (-1.25,-1.25) {};
    \node[fill=yellow!20] at (-.75,-.75) {};
    \node[fill=yellow!20] at (-.25,-.75) {};
    \node[fill=yellow!20] at (-.75,-1.25) {};
    \node[fill=yellow!20] at (-.25,-1.25) {};
    \fill[black] (-.25,-.75) circle(.09);
    \node[fill=yellow!20] at (.25,-1.25) {};
    \fill[black] (.25,-1.25) circle(.09);
    \node[align=left] at (-1.25,1.8) {$1$};
    \node[align=left] at (-.75,1.8) {$2$};
    \node[align=left] at (-.25,1.8) {$3$};
    \node[align=left] at (.25,1.8) {$4$};
    \node[align=left] at (.75,1.8) {$5$};
    \node[align=left] at (1.25,1.8) {$6$};
    \node[align=left] at (-1.75,1.25) {$1$};
    \node[align=left] at (-1.75,.75) {$2$};
    \node[align=left] at (-1.75,.25) {$3$};
    \node[align=left] at (-1.75,-.25) {$4$};
    \node[align=left] at (-1.75,-.75) {$5$};
    \node[align=left] at (-1.75,-1.25) {$6$};
\end{tikzpicture}
\qquad
\hspace{1cm}
\begin{tikzpicture}
    \node (1) at (2,-1) {1};
    \node (2) at (4,-1) {2};
    \node (3) at (3,0) {3};
    \node (4) at (5,0) {4};
    \node (5) at (4,1) {5};
    \node (6) at (5,2) {6};
    \draw [->] (3)--(1);
    \draw [->] (4)--(1);
    \draw [->] (5)--(2);
    \draw [->] (5)--(3);
    \draw [->] (6)--(2);
    \draw [->] (6) to [out=180,in=90] (3);
    \draw [->] (6)--(4);
\end{tikzpicture}
\qquad
\caption{Left: the ${\bf 2} + {\bf 1} + {\bf 1}$-avoiding natural unit interval order $P(4,3,1,1)$. $\stair(n)$ (resp. $\stair(n-1)$) is shown as the green (resp. blue) staircase path; Right: the partial order diagram of $P$ with the notation $i\rightarrow j$ if $j\prec i$.}
    \label{2+1+1-avoiding}
\end{figure}

In this paper, we focus on unit interval orders that additionally are ${\bf 2} + {\bf 1} + {\bf 1}$-avoiding. We characterize such orders in terms of a particular set of partitions $\lambda\subset \stair(n)$ in the following lemma.

\begin{lemma}
    Let $P(\lambda)$ be a natural unit interval order with associated partition $\lambda\subset \stair(n)$. Then $P(\lambda)$ avoids pattern ${\bf 2} + {\bf 1} + {\bf 1}$ if and only if the shape of $\lambda$ has all of its corners as a subset of the corners of $\stair(n-1)$ or $\stair(n)$.
\end{lemma}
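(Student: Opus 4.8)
The plan is to set up a dictionary between the cells of the shape and the covering data of the poset, and then translate both the pattern-containment and the corner condition into elementary inequalities on the parts of $\lambda$.

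First I would record the basic correspondence. Since $a \prec b \iff a \le \lambda_{n+1-b}$, the cell in row $b$ and column $a$ of the $n\times n$ grid lies in the shape of $\lambda$ exactly when $a \prec b$; in particular every cell lies strictly below the main diagonal. A short computation shows that the corners of $\stair(n)$ are precisely the cells of the first subdiagonal (row $-$ column $=1$) and the corners of $\stair(n-1)$ are precisely the cells of the second subdiagonal (row $-$ column $=2$). A corner of $\lambda$ occurs at an index $i$ with $\lambda_i > \lambda_{i+1}\ge 0$ (a \emph{corner index}), and it sits in row $n+1-i$, column $\lambda_i$, so its row minus column equals $(n+1-i)-\lambda_i$. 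Hence the stated condition ``every corner of $\lambda$ is a corner of $\stair(n-1)$ or $\stair(n)$'' is equivalent to the purely numerical statement that every corner index $i$ satisfies $\lambda_i \ge n-i-1$; equivalently, $\lambda$ has \emph{no} corner index $i$ with $\lambda_i \le n-i-2$.

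Next I would characterize containment of $\mathbf{2}+\mathbf{1}+\mathbf{1}$. The defining property of a natural unit interval order forces any copy of the pattern to consist of a relation $x\prec y$ together with two elements $w,z$ that are pairwise incomparable, incomparable to both $x$ and $y$, and lie strictly between $x$ and $y$ in the usual order; ordering them as $x<w<z<y$, the six (in)comparabilities collapse, after using monotonicity of $j\mapsto \lambda_{n+1-j}$, to the three essential ones $x\le \lambda_{n+1-y}$, $x>\lambda_{n+1-z}$, and $w>\lambda_{n+1-y}$. Writing $p=n+1-y$ and $q=n+1-z$ (so $p<q$), the existence of suitable integers $x$ and $w$ becomes the pair of conditions $\lambda_q<\lambda_p$ and $\lambda_p+q\le n-1$. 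Thus $P(\lambda)$ contains $\mathbf{2}+\mathbf{1}+\mathbf{1}$ if and only if there exist $p<q$ with $\lambda_p>\lambda_q$ and $\lambda_p+q\le n-1$.

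Finally I would match this numerical criterion against the corner condition from the first step. For the easy direction, a corner index $i$ with $\lambda_i\le n-i-2$ yields the pair $(p,q)=(i,i+1)$. Conversely, given such a pair $p<q$, taking $i$ to be the smallest index $\ge p$ with $\lambda_i>\lambda_{i+1}$ gives $\lambda_i=\lambda_p$ and $i\le q-1$, whence $\lambda_i+i+1\le \lambda_p+q\le n-1$, so $i$ is a corner index with $\lambda_i\le n-i-2$. This closes the loop and proves the lemma. I expect the main obstacle to be the bookkeeping in the third paragraph above: correctly invoking the natural unit interval order axiom to pin down the total order $x<w<z<y$, and then verifying both that exactly those three inequalities survive and that they are simultaneously satisfiable by integers $x,w$ precisely under the two displayed conditions. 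The remaining steps are routine once the cell-to-order dictionary is in place.
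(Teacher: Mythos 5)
Your proposal is correct, and every step checks out: the dictionary (cell $(b,a)$ of the grid lies in $\lambda$ iff $a\prec b$; corners of $\stair(n)$ and $\stair(n-1)$ are exactly the first and second subdiagonals) is right; the reduction of the six (in)comparabilities to the three inequalities $x\le\lambda_{n+1-y}$, $x>\lambda_{n+1-z}$, $w>\lambda_{n+1-y}$ via the unit interval axiom and monotonicity of $j\mapsto\lambda_{n+1-j}$ is right; and the witness constructions in both directions of the final matching ($(p,q)=(i,i+1)$ one way, the minimal descent index $i\ge p$ the other way) are valid. Your route, however, is organized differently from the paper's. The paper proves the statement as a chain of equivalences on the \emph{avoidance} side: it converts the corner condition into restrictions on the multiplicities of the distinct parts (writing $\lambda=(\lambda_1^{i_1}\cdots\lambda_\ell^{i_\ell})$ with $i_1=n-\lambda_1$ or $n-\lambda_1-1$, etc.), then into a global structural description of $P(\lambda)$ as a stack of consecutive, mutually incomparable blocks with prescribed cover relations, and finally asserts that this structure is exactly what avoidance of $\mathbf{2}+\mathbf{1}+\mathbf{1}$ means; that last equivalence is left largely to the reader. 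You instead work contrapositively with \emph{containment}: you derive the clean numerical criterion that $P(\lambda)$ contains the pattern iff there exist $p<q$ with $\lambda_p>\lambda_q$ and $\lambda_p+q\le n-1$, and then show this is precisely the negation of ``every corner index $i$ has $\lambda_i\ge n-i-1$.'' What each approach buys: yours is more self-contained and verifiable, since pattern occurrences are localized to explicit witnesses and nothing is left as an unproved equivalence; the paper's gives more structural insight, exhibiting the avoiding posets explicitly as chains of antichains (the poset-theoretic counterpart of the $K_\alpha$-chains of Gebhard--Sagan), which is the picture used informally elsewhere in the paper.
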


\begin{proof}
We have the following equivalent statements: The shape of $\lambda$ has all its corners as a subset of the corners of $\stair(n-1)$ or $\stair(n)$
\begin{align*} 
&\iff \lambda=(\lambda_1^{i_1}\lambda_2^{i_2}\cdots\lambda_\ell^{i_\ell}) \text{ with } i_1=n-\lambda_1-1 \text{ or }n-\lambda_1, \\
&\qquad\quad i_j=\lambda_{j-1}-\lambda_j \text{ or }\lambda_{j-1}-\lambda_j\pm 1\text{ for all } 2\leq j\leq l\\
& \iff \text{either }\lambda_j+1, \lambda_j+2,\cdots, \lambda_{j-1}\prec \lambda_{j-1}+2\\
& \qquad\quad \text{and } \lambda_j+1, \lambda_j+2,\cdots, \lambda_{j-1}+1 \text{ are mutually incomparable, }\\ 
& \qquad\quad \text{or }\lambda_j+1, \lambda_j+2,\cdots, \lambda_{j-1}\prec \lambda_{j-1}+1\\
& \qquad\quad \text{and } \lambda_j+1, \lambda_j+2,\cdots, \lambda_{j-1} \text{ are mutually incomparable }\\ 
& \qquad\quad \text{for all } 2\leq j\leq \ell+1, \lambda_{\ell+1}=0\\
&\iff P(\lambda) \text{ avoids pattern } {\bf 2} + {\bf 1} + {\bf 1}.\\
\end{align*}
The claim follows.
\end{proof}

\begin{example}
 The unit interval order $P(4,3,1,1)$ in Figure \ref{2+1+1-avoiding} avoids pattern ${\bf 2} + {\bf 1} + {\bf 1}$ since the corners of $\lambda$ agree with a subset of corners of the shape of $\stair(n-1)$.
\end{example}

\subsection{The Stanley-Stembridge Conjecture}
Following \cite{STANLEY1995166}, For $G$ a finite graph with vertices $V(G)=\{v_1, v_2, \ldots, v_n\}$ and $\mathbb{P}=\{1,2,3,\ldots\}$, a {\em proper coloring} of $G$ is a function $\kappa: V \rightarrow \mathbb{P}$ such that two vertices do not share the same color whenever they are connected by an edge in $G$. Then we define the chromatic symmetric functions as follows.

\begin{defn}
    Let $G$ be a finite graph with vertices $V(G)=\{v_1, v_2, \ldots, v_n\}$, the {\em chromatic symmetric function} $X_G(x_1,x_2,\ldots)$ is \[X_G(x_1,x_2,\ldots)=\sum_{\kappa}x_{\kappa(v_1)}x_{\kappa(v_2)}\cdots x_{\kappa(v_n)},\] where the sum ranges over all proper colorings $\kappa$. 
\end{defn}

Let $P$ be a partial order on $[n]$, the {\em incomparability graph} $G$ of $P$ has vertices as the elements of $P$ where two vertices are connected by an edge if and only if the elements are incomparable in $P$. The Stanley-Stembridge Conjecture is as follows.

\begin{conj}\cite{STANLEY1995166,STANLEY1993261,Stembridge_1992}\label{3+1pos}
Let $P$ be a ${\bf 3} + {\bf 1}$-avoiding partial order and $G$ be the incomparability graph of $P$, then $X_G$ is $e$-positive.
\end{conj}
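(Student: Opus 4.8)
The plan is to attack the conjecture in its reduced form and then push the diagrammatic machinery of this paper from the ${\bf 2}+{\bf 1}+{\bf 1}$-avoiding case up to all natural unit interval orders. First I would invoke Guay-Paquet's reduction \cite{guaypaquet2013modular}: it suffices to prove $e$-positivity of $X_G$ when $G$ is the incomparability graph of a natural unit interval order, i.e.\ of some $P(\lambda)$ with $\lambda\subset\stair(n)$ under the Shareshian-Wachs parametrization \cite{SHARESHIAN2016497}. Since the involution $\omega$ exchanges the bases $e_\lambda$ and $h_\lambda$, the statement ``$X_G$ is $e$-positive'' is equivalent to ``$\omega X_G$ is $h$-positive''. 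This recasts the entire problem as an $h$-positivity statement, which is precisely the output that the strand-diagram trace is designed to produce, and it explains why Theorem \ref{hpos for 211} is the right prototype: that theorem establishes exactly this $h$-positivity for the sub-family of $\lambda$ whose corners lie on $\stair(n-1)$ or $\stair(n)$.

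Next I would realize $\omega X_G$, for an arbitrary $\lambda\subset\stair(n)$, as the full trace of a strand diagram $D_\lambda$, and then evaluate that trace not all at once but through a sequence of partial traces, peeling off one or a few strands at a time. At every intermediate stage the running expression should be a $\Lambda$-linear combination of the operators $\partial_k D = h_k D + h_{k-1}D^1 + \cdots + h_0 D^k$ attached to smaller diagrams, with $h$-positive coefficients. The induction then bottoms out at single-strand diagrams, whose traces one checks directly to be $h$-positive using the power-sum identities of Propositions \ref{n!h_n} and \ref{ih_i}. In outline this is the same argument that proves Theorem \ref{hpos for 211}; the content of the generalization is that the two ``lucky properties'' quoted in the introduction must now be upgraded to hold for \emph{every} shape, not only for the staircase-cornered ones.

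Concretely, I would need two structural statements for general $\lambda$. The reduction step asks that a partial trace of any $\partial_k D$ expand as an $h$-positive combination of $\partial_{k'}D'$ with $D'$ having strictly fewer strands; the base step asks that the single-strand traces be $h$-positive. The base step is robust and should survive essentially verbatim. The reduction step is where the whole difficulty concentrates. In the ${\bf 2}+{\bf 1}+{\bf 1}$-avoiding family the corners of $\lambda$ are confined to two adjacent staircases, which keeps the set of diagrams produced by a partial trace small and forces the structure constants to be manifestly nonnegative. For unrestricted $\lambda$ the corners may sit anywhere inside $\stair(n)$, so a single partial trace creates a much wider zoo of diagram types, and there is no a priori reason that reexpanding them in the $\partial_{k'}D'$-family avoids cancellation.

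The main obstacle, therefore, is to identify the correct enlarged family of diagrammatic primitives that is closed under partial trace, and then to prove nonnegativity of the resulting structure constants in complete generality. I expect this to require a genuinely new input beyond diagrammatic bookkeeping: for instance a bijective or sign-reversing-involution interpretation of each structure constant, or an identification of the partial-trace operation with a positive operation on the representation-theoretic side, via the $S_n$-actions on Hessenberg cohomology of \cite{SHARESHIAN2016497} or the Kazhdan-Lusztig characters of \cite{haiman1993hecke}. Absent such an input the inductive positivity can break at a single tracing step, and it is exactly this step that keeps the conjecture open in full generality; the present paper succeeds precisely because the ${\bf 2}+{\bf 1}+{\bf 1}$ restriction trivializes it.
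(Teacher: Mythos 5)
There is a fundamental mismatch here: the statement you were asked to prove is the Stanley--Stembridge Conjecture itself, which is open, and which this paper does not prove --- it only proves the special case of ${\bf 2}+{\bf 1}+{\bf 1}$-avoiding unit interval orders (Theorem \ref{hpos for 211}). Your submission is, by your own account, a research program rather than a proof: you correctly assemble the known reductions (Guay-Paquet's reduction to natural unit interval orders, the equivalence of $e$-positivity of $X_G$ with $h$-positivity of $\omega X_G = \ch(\Gamma_\lambda)$) and correctly identify the paper's inductive template, but the entire mathematical content --- that partial traces of $\partial_k D$ expand $h$-positively in $\partial_{k'}D'$ for \emph{arbitrary} $\lambda \subset \stair(n)$ --- is left as a desideratum. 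Since that step is exactly what is open, no part of the conjecture is actually established beyond what the cited literature already gives.

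To make the obstruction concrete: the proof of Theorem \ref{hpos for 211} works because in the ${\bf 2}+{\bf 1}+{\bf 1}$-avoiding case any two consecutive crossings of the staircase-like diagram share \emph{only one} strand, namely the right-most strand of the lower crossing. Hence tracing out the top crossing of size $m$ deposits dots only on the right-most strand of the remaining diagram $D'$, so the result of $\trace^{m-1}(\partial_k D)$ stays inside the family $\{\partial_i D'\}$, whose members carry weights exclusively on the right-most strand, and the single-crossing computation (Theorem \ref{inductive expansion}) applies verbatim with $\sstrand{}$ replaced by $D'$. For a general $\lambda$, consecutive crossings $[i_k,j_k]$ and $[i_{k+1},j_{k+1}]$ overlap in several strands ($i_{k+1} < j_k$ with $j_k - i_{k+1} > 0$ possibly large), so the trace (Definition \ref{tr(D)}) places dots on strands interior to the next crossing; the family $\{\partial_k D\}$ is then \emph{not closed} under partial trace, and one would need an enlarged family of weighted-diagram primitives with provably nonnegative structure constants. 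You name this issue in qualitative terms, but you supply neither the enlarged family nor any positivity mechanism (bijection, sign-reversing involution, or representation-theoretic identification), and without one the induction cannot even be set up. So the proposal is an honest and accurate map of where the difficulty lies --- consistent with the paper's own framing and with its Conjecture \ref{conj:gen} and the Haiman connection --- but it is not a proof, and no proof of this statement exists in the paper or elsewhere.
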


It was proved in \cite{guaypaquet2013modular} that the condition ${\bf 3} + {\bf 1}$-avoiding partial orders on $P$ in Conjecture \ref{3+1pos} can be reduced to all natural unit interval orders (i.e. those that also avoid ${\bf 2} + {\bf 2}$). Thus it is enough to show that $X_G$ is $e$-positive for $G$ be an incomparability graph of a unit interval order.

Stembridge gave a variation of Conjecture \ref{3+1pos}. Let $\ch$ denotes the {\em characteristic map} from any class functions $\chi$ on $S_n$ to $\Lambda_n$: \[\ch (\chi)=\frac{1}{n!}\sum_{\sigma\in S_n}\chi (\sigma)p_{\cycletype(\sigma)}.\]

\begin{prop}\cite{Stembridge_1992}\label{pposexp}
Let $H$ be an $n\times n$ Jacobi-Trudi matrix whose zero entries form the shape of a partition $\lambda\subset \stair(n)$ in the south-west corner of $H$. Let $\Gamma_\lambda$ be the $S_n$-character corresponding to $H$. Then we have \[\ch (\Gamma_\lambda)=\sum_{\sigma\cap \lambda=\varnothing}p_{\cycletype(\sigma).}\] where $\sigma\cap \lambda=\varnothing$ identifies all $\sigma\in S_n$ such that the nonzero entries of the permutation matrix of $\sigma$ have no intersection with $\lambda$.
\end{prop}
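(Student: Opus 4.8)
The plan is to establish the formula $\ch(\Gamma_\lambda)=\sum_{\sigma\cap\lambda=\varnothing}p_{\cycletype(\sigma)}$ by unwinding the definition of the character $\Gamma_\lambda$ attached to the Jacobi--Trudi matrix $H$ and feeding it through the characteristic map. First I would recall how a Jacobi--Trudi matrix determines an $S_n$-character: the matrix $H=(h_{\mu_i-i+j})$ is the Jacobi--Trudi expression for a (possibly virtual, possibly skew) Schur function, and by the classical correspondence between symmetric functions of degree $n$ and virtual characters of $S_n$ under $\ch$, the symmetric function $\det H$ is precisely $\ch(\Gamma_\lambda)$. So the statement reduces to computing $\det H$ in the power-sum basis and identifying its coefficients with the permutations avoiding $\lambda$.

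Next I would expand the determinant as $\det H=\sum_{\sigma\in S_n}\operatorname{sgn}(\sigma)\prod_{i=1}^n H_{i,\sigma(i)}$, where each entry $H_{i,j}$ is either $0$ (when the cell $(i,j)$ lies in the shape $\lambda$ in the south-west corner) or some $h_{d}$ with $d=d(i,\sigma(i))$ depending on the displacement. The condition that the product is nonzero is exactly that no cell $(i,\sigma(i))$ of the permutation matrix of $\sigma$ meets $\lambda$, i.e. $\sigma\cap\lambda=\varnothing$. The heart of the computation is then to show that summing $\operatorname{sgn}(\sigma)\prod_i h_{d(i,\sigma(i))}$ over all such $\sigma$, and converting each $h$ into power sums via $h_n=\sum_{\mu\vdash n}z_\mu^{-1}p_\mu$ (Proposition \ref{n!h_n} in its generating form), collapses all the intervening signs and multinomial factors so that each permutation contributes exactly one term $p_{\cycletype(\sigma)}$ with coefficient $1$.

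The main obstacle I anticipate is this cancellation of signs and combinatorial bookkeeping: a priori each nonvanishing $\sigma$ contributes a product of $h$'s whose power-sum expansion is a sum over many partitions, so recovering the clean answer $\sum_\sigma p_{\cycletype(\sigma)}$ requires a careful argument. The cleanest route is to invoke Stembridge's original result directly, or to reprove it by the standard involution/lattice-path argument on the expansion of the Jacobi--Trudi determinant, grouping the terms so that the only surviving contributions are indexed by permutations and their cycle types. Concretely, one interprets $\det H$ as counting families of non-intersecting lattice paths weighted by $h$'s, and the $p$-expansion then follows from the transition formula between the $h$- and $p$-bases combined with the sign-reversing cancellation forcing the surviving index set to be exactly $\{\sigma:\sigma\cap\lambda=\varnothing\}$.

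Since this proposition is attributed to Stembridge \cite{Stembridge_1992} and is stated here as background, I expect the write-up to be short: it suffices to recall that $\det H=\ch(\Gamma_\lambda)$ under the Frobenius correspondence and then quote or briefly sketch the $p$-positive expansion, with the displacement-counting verification that the nonzero determinant terms are precisely the $\lambda$-avoiding permutations being the only genuinely new bookkeeping step.
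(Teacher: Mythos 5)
The paper itself never proves Proposition~\ref{pposexp}: it is imported as background with a citation to Stembridge, and the only related argument in the paper is Lemma~\ref{lem:strand}, which takes the proposition as an input. So your proposal must stand on its own, and it has a fatal gap at its first step. You declare that $\ch(\Gamma_\lambda)=\det H$, i.e.\ you read ``the $S_n$-character corresponding to $H$'' as $\ch^{-1}$ of the Jacobi--Trudi determinant. That identification is false, and everything after it inherits the error. The determinant of a Jacobi--Trudi matrix is a single skew Schur function $s_{\mu/\nu}$, so $\ch^{-1}(\det H)$ is the skew character $\chi^{\mu/\nu}$, whose value at the identity is the number of standard Young tableaux of shape $\mu/\nu$. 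By contrast, the right-hand side of the proposition contains $p_{1^n}$ with coefficient exactly $1$ (the diagonal cells never lie in $\lambda\subset\stair(n)$, so the identity permutation always avoids $\lambda$), which forces $\Gamma_\lambda$ to have degree $n!$ at the identity. Concretely, for $\lambda=(2,1)\subset\stair(4)$ one may take $\mu/\nu=(4,4,4,4)/(3,3,3,3)$, whose Jacobi--Trudi matrix has zeros exactly at $(3,1),(4,1),(4,2)$; then $\det H=s_{(1,1,1,1)}=e_4$, of degree $1$, while Example~\ref{ex(2,1)} gives $\ch(\Gamma_\lambda)=p_{1111}+3p_{211}+2p_{31}+p_{22}+p_4=2h_{22}+2h_{31}+4h_4$, of degree $24$. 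Two further symptoms of the same problem: the right-hand side of the proposition is $p$-positive, whereas the power-sum expansion of a Schur function has mixed signs, so no sign-cancellation scheme applied to $\det H$ can ever produce it; and $\det H$ is not even determined by $\lambda$ (for the empty board with $n=2$, the shapes $(2,2)/(1,1)$ and $(2,1)/\varnothing$ give $\det H=e_2$ and $\det H=s_{21}$ respectively), while $\Gamma_\lambda$ depends on $\lambda$ alone.

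The proposed mechanism in your second and third paragraphs cannot be repaired within the determinant framework: in the term of $\det H$ indexed by $\sigma$, the subscripts $d(i,\sigma(i))$ of the entries $h_{d(i,\sigma(i))}$ bear no relation whatsoever to the cycle type of $\sigma$, so there is no route by which that term could ``collapse'' to $p_{\cycletype(\sigma)}$, and the sign obstruction above rules it out globally. The actual content of Stembridge's proposition lives in the \emph{unsigned} (immanant) structure of $H$, not in its determinant (which is only the sign-immanant): $\Gamma_\lambda$ is the character attached to $H$ via the immanant construction, equivalently --- as Section~5 of the paper indicates --- the class function attached to the group algebra element given by the product of crossings $C'_{w_0(i_k,j_k)}$, each crossing being the unsigned sum of all permutations of the strands it engages; the terms of that product are indexed precisely by the $\lambda$-avoiding permutations, and the proposition says that applying $\ch$ yields the cycle indicator $\sum_{\sigma\cap\lambda=\varnothing}p_{\cycletype(\sigma)}$ of the complementary board. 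This is exactly the bookkeeping the paper performs, in strand-diagram language, in Lemma~\ref{lem:strand}. A correct write-up would therefore either quote Stembridge's theorem outright (as the paper does) or start from the immanant/group-algebra definition of $\Gamma_\lambda$; starting from $\det H$ proves a statement about the wrong character.
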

We have $\ch(\Gamma_{\lambda}) = \omega(X_G)$, where $G$ is the incomparability graph of $P(\lambda)$ and $\omega$ is the involution on $\Lambda$ defined by $\omega(e_{\lambda}) = h_\lambda$ (see \cite[Section 5]{STANLEY1993261}). Thus, the Stanley-Stembridge Conjecture is equivalent to showing that $\ch(\Gamma_{\lambda})$ is $h$-positive for any $\lambda \subset \stair(n)$

\subsection{Strand diagrams}
\begin{defn}
A {\em crossing} $C$ is a diagram where several lines cross at one point. A {\em strand diagram} $D$ of size $n$ is a concatenation of several crossings with $n$ strands total. If a particular crossing engages strands $i$ through $j$, we shall denote this crossing $[i,j]$, see Figure \ref{stranddiagram}.
\end{defn}

\begin{figure}[h]
    \centering
    \vspace{.5cm}
    \begin{tikzpicture}
    [x=0.18pt,y=0.18pt]
\draw (0,0)--(300,200);
\draw (150,0)--(150,200)--(150,400);
\draw (300,0)--(0,200)--(0,400);
\node[align=left] at (-180,50) {$C_1=[1,3]$};
\draw (300,200)--(600,400);
\draw (400,0)--(400,200)--(500,400);
\draw (500,0)--(500,200)--(400,400);
\draw (600,0)--(600,200)--(300,400);
\node[align=left] at (0,-50) {$1$};
\node[align=left] at (150,-50) {$2$};
\node[align=left] at (300,-50) {$3$};
\node[align=left] at (400,-50) {$4$};
\node[align=left] at (500,-50) {$5$};
\node[align=left] at (600,-50) {$6$};
\node[align=left] at (750,250) {$C_2=[3,6]$};
\end{tikzpicture}
    \caption{A strand diagram $D$ of size $6$ with two concatenated crossings $C_1$ and $C_2$.}
    \label{stranddiagram}
\end{figure}

To each $\lambda=(\lambda_1,\lambda_2,\cdots,\lambda_\ell)$, we can associate a strand diagram to the unit interval order $P(\lambda)$ on $[n]$ as follows: for each outer corner $(i,j)$ of the shape of $\lambda$ we consider crossing $[i,j]$. We parse the corners in the direction from north-west to south-east, and we concatenate the associated crossings in this order. Further, we add $[1,n-\ell]$ as the first crossing and $[\lambda_1+1,n]$ as the last one in the diagram.

\begin{example}\label{SD for (2,1)}
Let $D$ be a strand diagram of size 4 obtained from an unit interval order with associated partition $\lambda=(2,1)$, see Figure \ref{uio(2,1)}. The outer corner $(2,3)$ of $\lambda$ is shown with an orange dot, giving us the crossing $[2,3]$. Additionally we have $(1,2)$ and $(3,4)$ shown with blue dots, resulting in the first crossing $[1,2]$ and the last crossing $[3,4]$.
\end{example}

\begin{figure}[h]
    \centering
        \begin{tikzpicture}[every node/.style={minimum size=.5cm-\pgflinewidth, outer sep=0pt}]
\draw[step=.5cm] (-1,-1) grid (1,1);
\draw (-1,1) -- (1,-1);
\draw [dashed] (-1,.5) -- (.5,-1);
    \node[fill=yellow!20] at (-.75,-.25) {};
    \node[fill=yellow!20] at (-.75,-.75) {};
    \node[fill=yellow!20] at (-.25,-.75) {};
    \fill[blue] (.25,-.75) circle(.09);
    \fill[orange] (-.25,-.25) circle(.09);
    \fill[blue] (-.75,.25) circle(.09);
    \node[align=left] at (-.75,1.25) {$1$};
    \node[align=left] at (-.25,1.25) {$2$};
    \node[align=left] at (.25,1.25) {$3$};
    \node[align=left] at (.75,1.25) {$4$};
    \node[align=left] at (-1.25,.75) {$1$};
    \node[align=left] at (-1.25,.25) {$2$};
    \node[align=left] at (-1.25,-.25) {$3$};
    \node[align=left] at (-1.25,-.75) {$4$};
\end{tikzpicture}
\qquad
\hspace{1cm}
    \begin{tikzpicture}
    [x=0.18pt,y=0.18pt]
\draw (0,0)--(300,300);
\draw (100,0)--(0,100)--(0,300);
\draw (300,0)--(300,200)--(200,300);
\draw (200,0)--(200,100)--(100,200)--(100,300);
\node[align=left] at (0,-50) {$1$};
\node[align=left] at (100,-50) {$2$};
\node[align=left] at (200,-50) {$3$};
\node[align=left] at (300,-50) {$4$};
\end{tikzpicture}
    \caption{The unit interval order $P(2,1)$ and its associated strand diagram $D$.}
    \label{uio(2,1)}
\end{figure}

\begin{defn}
A {\em colored strand diagram} $\mathcal D$ of size $n$ is a coloring of the strands of some strand diagram $D$ with $n$ colors so that at each crossing the colors entering the crossing are also the colors leaving the crossing. 
\end{defn}

Each colored strand diagram $\mathcal D$ gives a permutation $\sigma_{\mathcal D}\in S_n$: each strand in a distinct color has one end at position $i$ at the bottom and the other end at position $\sigma_{\mathcal{D}}(i)$ on the top. Consequently, letting $\cycletype(\sigma_{\mathcal D})$ denote the cycle type of $\sigma_{\mathcal D}$, we can associate a power sum symmetric function $p_{\cycletype(\sigma_{\mathcal D})}$ to each colored strand diagram $\mathcal D$.

\begin{example}
Figure \ref{SD1324} is a colored strand diagram $\mathcal{D}$ of size $4$, representing $\sigma_{\mathcal D}=1324$. Using the cycle notation, $\sigma_{\mathcal D}=(1)(23)(4)$ gives the associated symmetric function $p_{211}$.
\end{example}

\begin{figure}[h]
    \centering
    \vspace{.5cm}
    \begin{tikzpicture}
    [x=0.18pt,y=0.18pt]
\draw[orange,thick] (0,0)--(50,50)--(0,100)--(0,300);
\draw[green,thick] (200,0)--(200,100)--(100,200)--(100,300);
\draw[red,thick] (100,0)--(50,50)--(250,250)--(200,300);
\draw[blue,thick] (300,0)--(300,200)--(250,250)--(300,300);
\node (1) at (0,-50) {1};
\node (2) at (100,-50) {2};
\node (3) at (200,-50) {3};
\node (4) at (300,-50) {4};
\node (5) at (0,350) {1};
\node (6) at (100,350) {2};
\node (7) at (200,350) {3};
\node (8) at (300,350) {4};
\end{tikzpicture}
    \caption{The colored strand diagram $\mathcal{D}$ of $\sigma_{\mathcal D}=1324.$}
    \label{SD1324}
\end{figure}

Let $D$ be the strand diagram obtained from a unit interval order $P(\lambda)$. Find all colored strand diagrams of $D$ and their corresponding permutations, giving associated power sums. Summing over all $\cD$'s, we associate a symmetric function to $D$ in terms of the power sum bases. This symmetric function is exactly the omega dual of the chromatic symmetric function associated to $P(\lambda)$.

\begin{lemma} \label{lem:strand}
Following the notations in Proposition \ref{pposexp}, let $D$ be the strand diagram associated to the unit interval order $P(\lambda)$. Then
\[\ch (\Gamma_\lambda)=\sum_{\sigma_{\mathcal D}}p_{\cycletype(\sigma_{\mathcal D})}.\]
\end{lemma}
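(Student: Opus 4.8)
The plan is to show that the two expressions for $\ch(\Gamma_\lambda)$ agree by matching the colored strand diagrams of $D$ with the permutations $\sigma$ counted in Proposition \ref{pposexp}, namely those whose permutation matrix avoids the shape of $\lambda$. By Proposition \ref{pposexp} we already know $\ch(\Gamma_\lambda)=\sum_{\sigma\cap\lambda=\varnothing}p_{\cycletype(\sigma)}$, so it suffices to produce a bijection between colored strand diagrams $\mathcal D$ of $D$ and permutations $\sigma\in S_n$ with $\sigma\cap\lambda=\varnothing$, under which $\sigma_{\mathcal D}=\sigma$. Once such a bijection is established, summing $p_{\cycletype(\sigma_{\mathcal D})}$ over all $\mathcal D$ reproduces exactly the right-hand side of Proposition \ref{pposexp}.

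First I would set up the correspondence $\mathcal D \mapsto \sigma_{\mathcal D}$, which is already built into the definition of a colored strand diagram: each color forms a single connected strand from bottom position $i$ to top position $\sigma_{\mathcal D}(i)$, and distinct colors must remain distinct through every crossing. I would then argue that the combinatorial data recording which bottom endpoint connects to which top endpoint is unconstrained exactly in the positions permitted by the crossings of $D$, and that the set of achievable permutations is precisely $\{\sigma : \sigma\cap\lambda=\varnothing\}$. This is where the construction of $D$ from $\lambda$ enters: recall $D$ is built by placing a crossing $[i,j]$ for each outer corner $(i,j)$ of the shape of $\lambda$ (parsed north-west to south-east), together with the boundary crossings $[1,n-\ell]$ and $[\lambda_1+1,n]$. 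I would verify that a strand can connect bottom position $i$ to top position $j$ if and only if the cell $(i,j)$ lies outside the Young diagram of $\lambda$, i.e. $(i,j)\notin\lambda$, which is exactly the condition that the nonzero entry of the permutation matrix at $(i,j)$ avoids $\lambda$.

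The key step, and the main obstacle, is establishing this last equivalence: that the crossings of $D$ realize \emph{all and only} the permutations whose support avoids $\lambda$. One direction requires checking that any coloring of the strands of $D$ yields a permutation with $\sigma\cap\lambda=\varnothing$; the reverse requires showing every such permutation arises from some valid coloring. I expect the cleanest approach is to track, crossing by crossing, the set of positions a given strand can reach, showing inductively that after processing all crossings associated to corners of $\lambda$ the reachable top positions for a bottom endpoint $i$ are exactly those $j$ with $(i,j)\notin\lambda$. The boundary crossings $[1,n-\ell]$ and $[\lambda_1+1,n]$ must be incorporated carefully, since they extend the reachable ranges at the extreme rows and columns; confirming that they contribute precisely the missing avoidance-compatible positions (and no forbidden ones) is the delicate part of the verification.

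Assuming the bijection is in hand, the conclusion is immediate:
\[
\sum_{\sigma_{\mathcal D}} p_{\cycletype(\sigma_{\mathcal D})} = \sum_{\sigma\cap\lambda=\varnothing} p_{\cycletype(\sigma)} = \ch(\Gamma_\lambda),
\]
where the first equality is the bijection and the second is Proposition \ref{pposexp}. This completes the proof. I would note that the identity $\ch(\Gamma_\lambda)=\omega(X_G)$ recorded after Proposition \ref{pposexp} then gives the promised interpretation of the strand-diagram sum as the omega dual of the chromatic symmetric function of $P(\lambda)$.
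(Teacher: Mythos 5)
Your overall strategy is the same as the paper's: identify colored strand diagrams of $D$ with permutations whose matrices avoid $\lambda$, then invoke Proposition~\ref{pposexp}. The paper executes one half of this concretely: it observes that any strand $k$ engaged by a crossing $[i,j]$ satisfies $\sigma_{\mathcal D}(k)\geq i$, deduces $\sigma_{\mathcal D}(k)>\lambda_{n+1-k}$ (so every coloring produces an avoiding permutation), and then notes that the resulting matrix is the transpose of the one in Proposition~\ref{pposexpr}, which is harmless since a permutation and its inverse have the same cycle type --- a convention point your write-up glosses over entirely.

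The genuine gap in your proposal is that the bijectivity claim, which is exactly the nontrivial content, is deferred (``I would verify\dots'', ``I expect the cleanest approach\dots'') and is mis-specified in two ways. First, your per-cell criterion --- bottom $i$ can reach top $j$ iff $(i,j)\notin\lambda$ --- is necessary but not sufficient for surjectivity: one needs all $n$ entries of an avoiding permutation to be realizable \emph{simultaneously} by a single consistent coloring, a strictly stronger statement than entrywise reachability (you do acknowledge this direction, but give no mechanism for it). Second, and more seriously, the left-hand sum in the lemma runs over colored diagrams, so you also need each avoiding permutation to arise from \emph{exactly one} coloring; your proposal never raises this multiplicity question. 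It is not a formality: if consecutive crossings share two or more strands, distinct colorings can induce the same permutation. For instance, in the concatenation of $[1,3]$ and $[2,4]$ --- which is precisely the diagram the paper's construction assigns to $\lambda=(1)$, $n=4$ --- the identity permutation arises from two distinct colorings: the one matching all colors identically at both crossings, and the one exchanging the colors on strands $2,3$ at the first crossing and exchanging them back at the second. So any complete argument must exploit the specific structure of these diagrams (e.g., that consecutive crossings share at most one strand, as happens in the ${\bf 2}+{\bf 1}+{\bf 1}$-avoiding case used for the main theorem) or otherwise control multiplicities. To be fair, the paper's own proof is also silent on surjectivity and injectivity, but it does prove the forward inclusion; your write-up, as it stands, establishes neither direction.

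(Note: the reference \ref{pposexpr} above should read \ref{pposexp}.)
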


\begin{proof}
    Let $\lambda=(\lambda_1,\lambda_2,\cdots,\lambda_\ell)$, and $\lambda_k=0$ for $k>\ell$. Since we create the strand diagram $D$ by concatenating the crossings $[i,j]$ from all outer corner $(i,j)$ of the shape of $\lambda$, each corresponding colored strand diagram $\mathcal{D}$ yields a permutation $\sigma_{\mathcal{D}}$ with restricted positions so that for any strand $k$ between strands $i$ and $j$, $\sigma_{\mathcal{D}}(k)\geq i$. 
    
    To find the permutation matrix of $\sigma_{\mathcal{D}}$ in the $n\times n$ square, we let all nonzero entries $(k,\sigma_{\mathcal{D}}(k))$ be at row $\sigma_{\mathcal{D}}(k)$ and column $k$. Then $\sigma_{\mathcal{D}}(k)>\lambda_{n+1-k}$ for all $1\leq k\leq n$. Here we switch the positions for rows and columns in the coordinate $(k,\sigma_{\mathcal{D}}(k))$ so that the permutation matrix we have is actually the transpose of the matrix described in Proposition \ref{pposexp}. They will produce the same symmetric function since the cycle types of a permutation and its inverse are identical.
    
    The nonzero entries of the permutation matrix of $\sigma_{\mathcal{D}}$ have no intersection with $\lambda$, and the result follows by Proposition \ref{pposexp}.
\end{proof}

\begin{example}\label{ex(2,1)}
Following Example \ref{SD for (2,1)}, Figure \ref{SDex(2,1)} illustrates all colored strand diagrams of $D$ with the corresponding permutations in their cycle notation. Thus, the associated symmetric function $\omega(X_G)=\ch (\Gamma_\lambda)=p_{1111}+3p_{211}+2p_{31}+p_{22}+p_4=2h_{22}+2h_{31}+4h_4$.
\end{example}

\begin{figure}[h]
    \centering
    \vspace{.5cm}
    \begin{tikzpicture}
    [x=0.18pt,y=0.18pt]
\draw[orange,thick] (0,0)--(50,50)--(0,100)--(0,300);
\draw[green,thick] (100,0)--(50,50)--(150,150)--(100,200)--(100,300);
\draw[red,thick] (200,0)--(200,100)--(150,150)--(250,250)--(200,300);
\draw[blue,thick] (300,0)--(300,200)--(250,250)--(300,300);
\node[align=left,scale=.8] at (150,-50) {$(1)(2)(3)(4)$};
\end{tikzpicture}
\qquad
    \begin{tikzpicture}
    [x=0.18pt,y=0.18pt]
\draw[orange,thick] (0,0)--(50,50)--(0,100)--(0,300);
\draw[green,thick] (100,0)--(50,50)--(150,150)--(100,200)--(100,300);
\draw[red,thick] (300,0)--(300,200)--(200,300);
\draw[blue,thick] (200,0)--(200,100)--(150,150)--(300,300);
\node[align=left,scale=.8] at (150,-50) {$(1)(2)(34)$};
\end{tikzpicture}
\qquad
    \begin{tikzpicture}
    [x=0.18pt,y=0.18pt]
\draw[orange,thick] (0,0)--(50,50)--(0,100)--(0,300);
\draw[green,thick] (200,0)--(200,100)--(100,200)--(100,300);
\draw[red,thick] (100,0)--(50,50)--(250,250)--(200,300);
\draw[blue,thick] (300,0)--(300,200)--(250,250)--(300,300);
\node[align=left,scale=.8] at (150,-50) {$(1)(23)(4)$};
\end{tikzpicture}
\qquad
    \begin{tikzpicture}
    [x=0.18pt,y=0.18pt]
\draw[orange,thick] (0,0)--(50,50)--(0,100)--(0,300);
\draw[green,thick] (200,0)--(200,100)--(100,200)--(100,300);
\draw[red,thick] (300,0)--(300,200)--(200,300);
\draw[blue,thick] (100,0)--(50,50)--(300,300);
\node[align=left,scale=.8] at (150,-50) {$(1)(243)$};
\end{tikzpicture}

\vspace{.5cm}
    \begin{tikzpicture}
    [x=0.18pt,y=0.18pt]
\draw[orange,thick] (100,0)--(0,100)--(0,300);
\draw[green,thick] (0,0)--(150,150)--(100,200)--(100,300);
\draw[red,thick] (200,0)--(200,100)--(150,150)--(250,250)--(200,300);
\draw[blue,thick] (300,0)--(300,200)--(250,250)--(300,300);
\node[align=left,scale=.8] at (150,-50) {$(12)(3)(4)$};
\end{tikzpicture}
\qquad
    \begin{tikzpicture}
    [x=0.18pt,y=0.18pt]
\draw[orange,thick] (100,0)--(0,100)--(0,300);
\draw[green,thick] (0,0)--(150,150)--(100,200)--(100,300);
\draw[red,thick] (300,0)--(300,200)--(200,300);
\draw[blue,thick] (200,0)--(200,100)--(150,150)--(300,300);
\node[align=left,scale=.8] at (150,-50) {$(12)(34)$};
\end{tikzpicture}
\qquad
    \begin{tikzpicture}
    [x=0.18pt,y=0.18pt]
\draw[orange,thick] (100,0)--(0,100)--(0,300);
\draw[green,thick] (200,0)--(200,100)--(100,200)--(100,300);
\draw[red,thick] (0,0)--(250,250)--(200,300);
\draw[blue,thick] (300,0)--(300,200)--(250,250)--(300,300);
\node[align=left,scale=.8] at (150,-50) {$(132)(4)$};
\end{tikzpicture}
\qquad
    \begin{tikzpicture}
    [x=0.18pt,y=0.18pt]
\draw[orange,thick] (100,0)--(0,100)--(0,300);
\draw[green,thick] (200,0)--(200,100)--(100,200)--(100,300);
\draw[red,thick] (300,0)--(300,200)--(200,300);
\draw[blue,thick] (0,0)--(300,300);
\node[align=left,scale=.8] at (150,-50) {$(1432)$};
\end{tikzpicture}
\qquad
    \caption{The colored strand diagrams of $D$ and their corresponding permutations for unit interval order $P(2,1)$.}
    \label{SDex(2,1)}
\end{figure}

\section{The trace of strand diagrams}
\label{action}

\begin{defn}\label{dot}
Given a strand diagram of size $n$, and a tuple of non-negative integers $(a_1,\ldots, a_n)$, we can decorate the top of strand $i$ with $a_i$ dots for each $i$. We call such a decorated strand diagram a \emph{weighted strand diagram}. We denote a diagram $D$ with $j$ dots on the right-most strand $D^j$, or $\sstrand{j}$ if the diagram is a single strand. 
\end{defn}

Though in principle one could decorate any of the strands of a diagram, we will only ever decorate the strands engaged by the right-most crossing of a diagram. 

\begin{defn}\label{def:stair}
A weighted strand diagram $D$ is said to be a {\it {staircase-like}} if the crossings $C_k = [i_k,j_k]$, $1 \leq k \leq \ell$ in $D$ form a staircase-like sequence: $i_1 < i_2 < \dotsc < i_{\ell}$ and $j_1 < j_2 < \dotsc < j_{\ell}$.
\end{defn}

A lot of what follows is specific to the staircase-like strand diagrams. In particular, diagrams corresponding to $P(\lambda)$-s are staircase-like. 

\begin{defn}\label{tr(D)}
Let $D$ be a weighted staircase-like strand diagram with weights $(a_1,\ldots, a_n)$. The \emph{trace} of $D$, denoted $\trace(D)$ is a $\Lambda$-linear combination of weighted strand diagrams obtained by removing the last strand of $D$ and decorating the resulting diagram $D'$ in the following two ways:
\begin{itemize}
    \item adding no extra dots, but multiplying by a factor of $p_{a_n + 1}$.
    \item adding $a_n + 1$ dots to one of the strands engaged by the right-most crossing of $D'$.
\end{itemize}
If $D$ is a single strand with weight $a_n$, $\trace(D)$ is simply the symmetric function $p_{a_n + 1}$.
\end{defn}

\begin{example}\label{extr(D)}
    Let $D$ be a strand diagram with a single crossing of size $4$. Using Definition \ref{tr(D)}, $D'$ is a crossing of size $3$, and $\trace(D)$ yields a $\Lambda$-linear combination of weighted $D'$ as shown in Figure \ref{extrD}. Figure \ref{ex tr(D^j)} shows the trace of $D^1$.
\end{example}

\begin{figure}[h]
    \centering
        \begin{tikzpicture}
    [x=0.15pt,y=0.15pt]
\node at (0,100) {$\trace{(D)}=$};
\draw (200,0)--(500,200);
\draw (300,0)--(400,200);
\draw (400,0)--(300,200);
\draw (500,0)--(200,200);
\draw [dashed,blue,thick](500,200) to[bend left=70] (500,0);
\node at (700,100) {$=$};
\end{tikzpicture}
    \begin{tikzpicture}
    [x=0.15pt,y=0.15pt]
\node[left] at (0,100) {$p_1$};
\draw (0,0)--(200,200);
\draw (100,0)--(100,200);
\draw (200,0)--(0,200);
\node[right] at (250,100) {$+$};
\end{tikzpicture}
    \begin{tikzpicture}
    [x=0.15pt,y=0.15pt]
\draw (0,0)--(200,200);
\draw (100,0)--(100,200);
\draw (200,0)--(0,200);
\filldraw[blue] (160,160) circle (2pt) node{};
\node[right] at (250,100) {$+$};
\end{tikzpicture}
    \begin{tikzpicture}
    [x=0.15pt,y=0.15pt]
\draw (0,0)--(200,200);
\draw (100,0)--(100,200);
\draw (200,0)--(0,200);
\filldraw[blue] (100,160) circle (2pt) node{};
\node[right] at (250,100) {$+$};
\end{tikzpicture}
    \begin{tikzpicture}
    [x=0.15pt,y=0.15pt]
\draw (0,0)--(200,200);
\draw (100,0)--(100,200);
\draw (200,0)--(0,200);
\filldraw[blue] (40,160) circle (2pt) node{};
\end{tikzpicture}
\qquad
    \caption{$\trace (D)$: The trace of a single crossing of size $4$.}
    \label{extrD}
\end{figure}


\begin{figure}[h]
    \centering
    \vspace{.5cm}
        \begin{tikzpicture}
    [x=0.15pt,y=0.15pt]
\node at (0,100) {$\trace{(D^1)}=$};
\draw (200,0)--(400,200);
\draw (300,0)--(300,200);
\draw (400,0)--(200,200);
\filldraw[blue] (360,160) circle (2pt) node{};
\draw [dashed,blue,thick](400,200) to[bend left=70] (400,0);
\node[align=center] at (600,100) {$=$};
\end{tikzpicture}
    \begin{tikzpicture}
    [x=0.15pt,y=0.15pt]
\node[left] at (0,100) {$p_2$};
\draw (0,0)--(100,200);
\draw (100,0)--(0,200);
\node[right] at (150,100) {$+$};
\end{tikzpicture}
    \begin{tikzpicture}
    [x=0.15pt,y=0.15pt]
\draw (0,0)--(100,200);
\draw (100,0)--(0,200);
\filldraw[blue] (80,160) circle (2pt) node{};
\node[right,scale=.8] at (80,160) {$2$};
\node[right] at (130,100) {$+$};
\end{tikzpicture}
    \begin{tikzpicture}
    [x=0.15pt,y=0.15pt]
\draw (0,0)--(100,200);
\draw (100,0)--(0,200);
\filldraw[blue] (20,160) circle (2pt) node{};
\node[left,scale=.8] at (20,160) {$2$};
\end{tikzpicture}
\qquad
    \caption{$\trace (D^1)$: The trace of a single crossing of size $3$ with one dot on the right-most strand.}
    \label{ex tr(D^j)}
\end{figure}

Instead of going directly from $\lambda \subset \stair(n)$ to $\ch(\Gamma_{\lambda})$ , taking the trace of a strand diagram allows us to consider intermediate steps (i.e. expressions involving both symmetric functions and diagrams) without losing any information. We make this explicit with the following lemma.

\begin{lemma}\label{trn=ch}
Let $D$ be the strand diagram corresponding to $P(\lambda)$. Then $\trace^n(D) = \ch(\Gamma_{\lambda})$. 
\end{lemma}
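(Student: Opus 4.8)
The plan is to prove a stronger, \emph{weighted} statement by induction on the size $m$ of the diagram, from which the lemma follows by specialization. For a weighted staircase-like diagram $D$ of size $m$ with weights $(a_1,\dots,a_m)$, I would define a symmetric function $G(D)$ by summing over all colorings $\mathcal D$ of the underlying strand diagram, where each cycle $C$ of $\sigma_{\mathcal D}$ contributes a factor $p_{\,\sum_{i\in C}(a_i+1)}$ rather than $p_{|C|}$; that is, the dots on a strand inflate the length of whichever cycle passes through that top endpoint. When all weights vanish this reduces to $\sum_{\mathcal D}p_{\cycletype(\sigma_{\mathcal D})}$, which equals $\ch(\Gamma_\lambda)$ for the diagram of $P(\lambda)$ by Lemma \ref{lem:strand}. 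Thus it suffices to prove $\trace^m(D)=G(D)$ for every weighted staircase-like $D$. The base case $m=1$ is immediate: a single weighted strand has the unique coloring $\sigma=\mathrm{id}$, whose only cycle contributes $p_{a_1+1}=\trace(\sstrand{a_1})$.

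For the inductive step I would first note that $\trace$ extends $\Lambda$-linearly, so that $\trace^m(D)=\trace^{m-1}(\trace(D))$ and, using Definition \ref{tr(D)}, $\trace^{m}(D)=p_{a_m+1}\,\trace^{m-1}(D')+\sum_{s}\trace^{m-1}(D'_s)$, where $D'$ is $D$ with its last strand removed and $D'_s$ is $D'$ with $a_m+1$ extra dots placed on strand $s$, the sum running over the strands $s$ engaged by the rightmost crossing of $D'$. By the inductive hypothesis each inner trace equals the corresponding value of $G$, so the whole step reduces to the purely combinatorial identity $G(D)=p_{a_m+1}G(D')+\sum_{s}G(D'_s)$. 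To prove this I would partition the colorings of $D$ according to the value $s=\sigma_{\mathcal D}(m)$. If $s=m$, then $m$ is a fixed point; deleting it gives a coloring of $D'$ and the cycle $\{m\}$ contributes the factor $p_{a_m+1}$, accounting for the first term. If $s\neq m$, splicing $m$ out of its cycle (rerouting $\sigma^{-1}(m)\mapsto s$) produces a coloring of $D'$; placing the $a_m+1$ dots on strand $s=\sigma(m)$ exactly transfers the length that strand $m$ contributed, so the weighted length of the affected cycle is unchanged and this class accounts for the term $G(D'_s)$.

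The main obstacle is showing that this splicing is a genuine bijection, i.e. that the rerouted permutation $\sigma'$ is still an \emph{admissible} coloring of the reduced diagram (in the sense of the proof of Lemma \ref{lem:strand}, $\sigma(k)\ge i$ for every strand $k$ engaged by a crossing $[i,j]$), and conversely that every admissible coloring of $D'_s$ lifts back. Writing the rightmost crossing of $D$ as $[c,m]$, admissibility of $\sigma$ forces $s=\sigma(m)\ge c$, while the staircase-like hypothesis (increasing left endpoints $i_1<\dots<i_\ell$) guarantees that every strand's lower bound is at most $c$. Hence, for $t=\sigma^{-1}(m)$, the rerouted value $\sigma'(t)=s\ge c\ge i_t$ still meets the constraint, and since shortening the last crossing from $[c,m]$ to $[c,m-1]$ leaves all lower bounds on strands $1,\dots,m-1$ unchanged, admissibility is preserved in both directions. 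This is precisely the point where the staircase-like hypothesis is indispensable. Once the bijection and the length bookkeeping are in place the recursion for $G$ follows, completing the induction; specializing to the unweighted diagram of $P(\lambda)$ and invoking Lemma \ref{lem:strand} then yields $\trace^n(D)=\ch(\Gamma_\lambda)$.
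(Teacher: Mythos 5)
Your proof is correct and takes essentially the same route as the paper's: the paper's own (very terse) argument also partitions the colorings of $D$ according to whether the last strand is a fixed point of $\sigma_{\mathcal D}$, matching the fixed-point case to the $p_{a_n+1}$ term and the case $\sigma_{\mathcal D}(n)=s$ to the term with dots on strand $s$. Your weighted generating function $G(D)$ merely makes explicit the inductive invariant --- dots inflate the weighted length of the cycle passing through that strand --- that the paper compresses into the phrase ``naturally corresponds,'' so your write-up is a more careful rendering of the same bijective idea rather than a different proof.
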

\begin{proof}
Fix a diagram $D$ of size $n$, and let $\sigma$ be a permutation corresponding to a coloring of $D$. If $\sigma(n) = n$, then $\sigma$ naturally corresponds to a permutation in the first term of the trace. Otherwise, $\sigma(n) = i$ for some strand $i$ engaged by the top-most crossing. In this case $\sigma$ naturally corresponds to a permutation in the term where dots were added to strand $i$.  
\end{proof}

\begin{example}\label{trace(2,1)}
    Use the strand diagram $D$ in Example \ref{SD for (2,1)}, we take successive partial traces of $D$, and the associated symmetric function $\ch(\Gamma_{\lambda})$ is given by $\trace^4(D)=p_{1111}+3p_{211}+2p_{31}+p_{22}+p_4$. See Figure \ref{ex2trD}.
\end{example}

\begin{figure}[h]
    \centering
  \includegraphics[scale=.5]{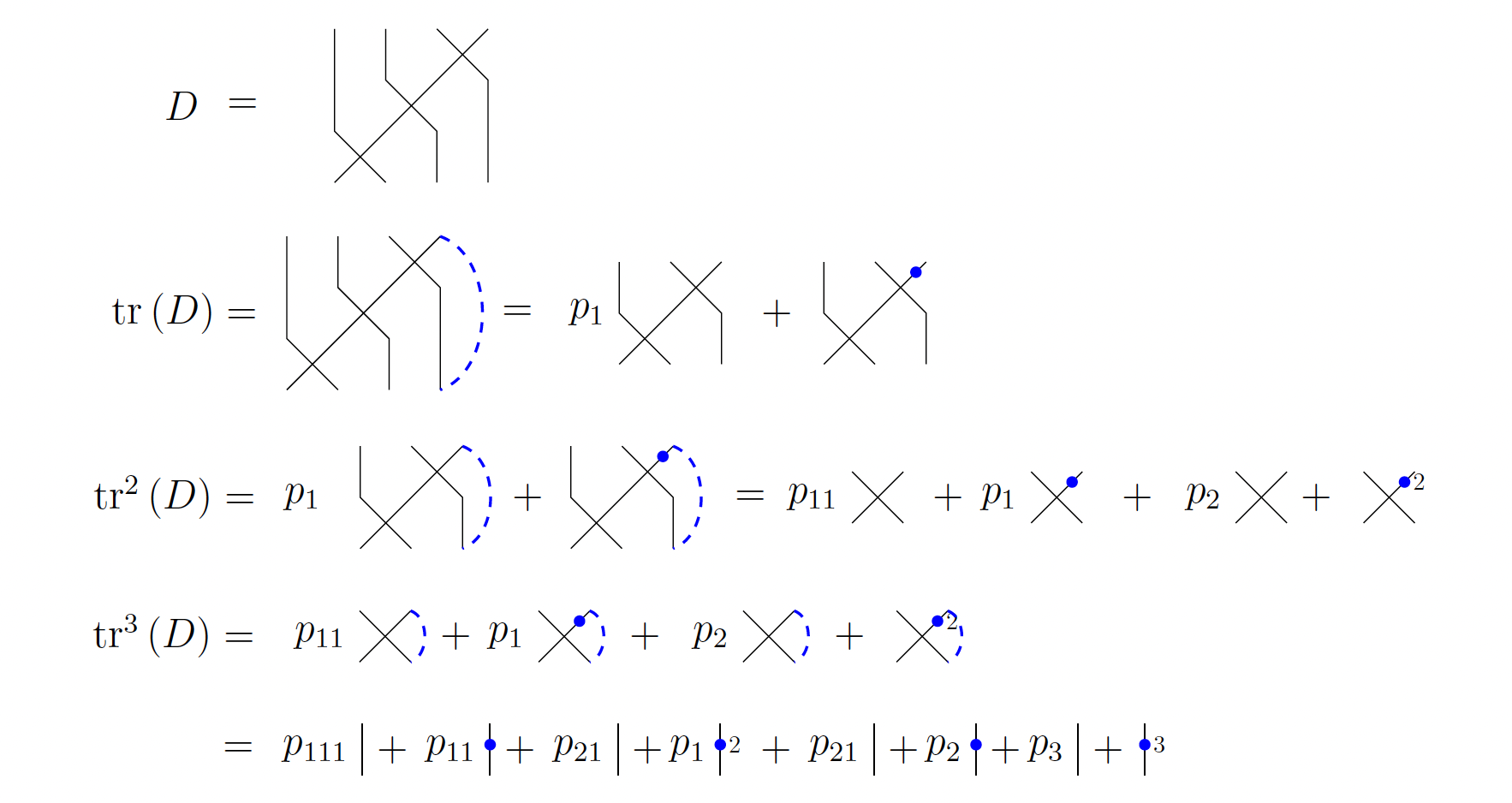}
    \caption{Successive partial traces of strand diagram $D$ from Example \ref{SD for (2,1)}.}
    \label{ex2trD}
\end{figure}

\section{The main result}
\label{sec:results}


As discussed above, every unit interval order can be associated with a strand diagram $D$, and taking traces of $D$ eventually generates the power sum expansion of $\omega (X_G)$. Our goal is to prove that for all {$\bf 2 + 1 +1$}-avoiding unit interval orders, these symmetric functions $\omega(X_G)$ are $h$-positive.

The following theorem is originally due to Gebhard and Sagan \cite[Corollary 7.7]{gs01}.

\begin{thm}\label{hpos for 211}
    Let $P(\lambda)$ be a {$\bf 2 + 1 +1$}-avoiding unit interval order and $G$ be the incomparability graph of $P(\lambda)$. Then $\omega (X_G)$ is $h$-positive.
\end{thm}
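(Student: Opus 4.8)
The plan is to prove the equivalent statement that $\trace^n(D)$ is $h$-positive, where $D$ is the strand diagram associated to $P(\lambda)$; this suffices by Lemma~\ref{trn=ch} together with the identity $\ch(\Gamma_\lambda)=\omega(X_G)$ recorded after Proposition~\ref{pposexp}. Following the outline in the introduction, I would set up an induction on the number of remaining strands, with invariant: \emph{after each partial trace, the expression is a nonnegative $\Lambda$-linear combination (i.e.\ with coefficients that are $h$-positive symmetric functions) of diagrams of the form $\partial_k D'$, where each $D'$ is a staircase-like diagram arising from a ${\bf 2}+{\bf 1}+{\bf 1}$-avoiding order.} Since $h_0=1$ we have $D=\partial_0 D$, so the initial expression already has this form, and after $n$ traces every diagram has collapsed to a single strand, leaving a pure symmetric function which the induction forces to be $h$-positive.

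The base case is a clean consequence of Proposition~\ref{ih_i}. For a single strand, Definition~\ref{tr(D)} gives $\trace(\sstrand{i})=p_{i+1}$, so
\begin{equation*}
\trace(\partial_k \sstrand{}) \;=\; \sum_{i=0}^{k} h_{k-i}\,\trace(\sstrand{i}) \;=\; \sum_{j=1}^{k+1} h_{k+1-j}\,p_j \;=\; (k+1)\,h_{k+1},
\end{equation*}
which is manifestly $h$-positive; this is exactly the second ``lucky property'' from the introduction.

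The heart of the argument is the inductive step, which I would phrase as a \emph{trace-reduction lemma} (the first lucky property): $\trace(\partial_k D)$ is a nonnegative $\Lambda$-combination of $\partial_{k'}D'$'s for strictly smaller diagrams. Expanding by linearity and applying Definition~\ref{tr(D)} to each $D^i$, write $D'$ for $D$ with its last strand removed and let $C'$ be the new right-most crossing. The computation splits into three pieces. First, the ``diagonal'' contributions $\sum_{i=0}^{k} h_{k-i}\,p_{i+1}\,D'$ collapse to $(k+1)h_{k+1}D'=(k+1)h_{k+1}\,\partial_0 D'$ exactly as in the base case, again by Proposition~\ref{ih_i}. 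Second, the dots added to the right-most strand of $C'$ assemble, after reindexing $j=i+1$, into $\partial_{k+1}D'-h_{k+1}D'$, so together with the diagonal term they yield $\partial_{k+1}D'+k\,h_{k+1}\,\partial_0 D'$, already a nonnegative $\partial$-combination. Third, there remain the terms in which $i+1$ dots are placed on an \emph{interior} strand of $C'$; these are not yet in $\partial$-form, and converting them is where the real work lies.

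The main obstacle, then, is to show that the interior-strand contributions $\sum_{i=0}^{k} h_{k-i}(D')^{(s,i+1)}$ also reduce to nonnegative $\partial$-combinations, and that the diagrams produced remain within the inductive class. I expect this to require a small repertoire of diagrammatic relations—sliding dots past a crossing and resolving a crossing that carries dots on an inner strand—each expressed as a nonnegative combination of smaller weighted diagrams. It is precisely here that the ${\bf 2}+{\bf 1}+{\bf 1}$-avoiding hypothesis is indispensable: by the characterization lemma for these orders, the corners of $\lambda$ lie on $\stair(n-1)$ or $\stair(n)$, which tightly constrains how consecutive crossings overlap, limiting the interior strands that can occur and keeping $C'$ of controlled width. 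This should both make the resolution relations produce $h$-positive coefficients and guarantee that each resulting $D'$ is again staircase-like of the allowed type, closing the induction. Verifying this closure—and checking that no cancellation spoils positivity—is the step I expect to be the crux of the proof.
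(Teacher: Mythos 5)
Your overall frame---induction organized around the operators $\partial_k$, with the base case $\trace(\partial_k \sstrand{}) = (k+1)h_{k+1}$---is the same as the paper's, but the inductive step, which you yourself flag as the crux, is missing, and the specific route you sketch for it provably cannot work. Your invariant demands a nonnegative $\partial$-combination after \emph{every single} trace, to be achieved by locally resolving the interior-dot terms. This fails already in the smallest case: let $D$ be a single crossing of size $3$. Then $\trace(\partial_0 D) = \partial_1 D' + E$, where $D'$ is the size-$2$ crossing and $E$ is $D'$ carrying one dot on its \emph{left} (interior) strand. Tracing $E$ to the end gives $\trace^2(E) = p_1p_2 + p_3 = 3h_3 - h_1h_2$, which is \emph{not} $h$-positive. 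Every candidate right-hand side for your postulated relation---a combination of $\partial_{k'}$'s of single strands or of the size-$2$ crossing with $h$-positive coefficients---has $h$-positive total trace (by your base case, and by a direct check for $\partial_{k'}D'$), so no such relation can exist, neither as an identity of formal weighted-diagram combinations nor merely up to trace. Positivity reappears only after the \emph{entire} crossing is traced out: a short computation shows $\trace^2(\partial_0 D) = 2h_2\sstrand{} + 2h_1\sstrand{1} + 2\sstrand{2} = 2\,\partial_2\sstrand{}$, i.e.\ the defect $-h_1h_2$ coming from $E$ cancels against the later traces of $\partial_1 D'$ (via identities like $h_1p_1 + p_2 = 2h_2$). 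So your invariant is false after a single trace, and the required cancellation is global across the crossing, not local term by term.

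This is exactly why the paper traces out a whole crossing in one batch rather than one strand at a time. For a single crossing of size $n$ it computes $\trace^{n-1}(\partial_k D)$ in closed form by enumerating the $n!$ permutations (Proposition \ref{exp of trace n-1}), and then shows, using the identity of Lemma \ref{l:doublesumidentity}, that this \emph{total} is an $h$-positive combination of $\partial_i\sstrand{}$'s (Theorem \ref{inductive expansion}); the proof there manipulates four sums $(A),(B),(C),(D)$ with genuine cancellations that a term-by-term local analysis cannot see. The ${\bf 2}+{\bf 1}+{\bf 1}$ hypothesis then enters in a sharper form than your ``controlled width'' heuristic: by the characterization lemma, consecutive crossings of $D$ share \emph{only} the right-most strand of the lower crossing, so after the batch trace of the top crossing every dot sits on the right-most strand of the remaining diagram $D'$, and Theorem \ref{inductive expansion} applies verbatim with $\sstrand{}$ replaced by $D'$, closing the induction (together with Lemma \ref{trn=ch} to identify the end result with $\ch(\Gamma_\lambda) = \omega(X_G)$). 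To repair your argument you would have to either adopt this batch-tracing strategy, or enlarge the inductive class to track interior-dotted diagrams with possibly non-positive contributions---at which point you are redoing the paper's global computation.
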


Let $D$ be an arbitrary strand diagram. We consider a $\Lambda$-linear combination of weighted $D$'s, denoted by $\partial_k D$. More precisely, for any $k\geq 0$, let \[\partial_k D=h_k D+h_{k-1} D^1+h_{k-2} D^2+\cdots+h_0 D^k.\] 

To prove Theorem \ref{hpos for 211}, firstly we use Proposition \ref{exp of trace n-1} to show that for a strand diagram $D$ consisting of a single crossing of size $n$, $\trace^{n-1}(\partial_k D)$ yields an $h$-positive combination of $\partial_i\sstrand{}$'s where $\sstrand{}$ denotes a single strand, as stated in Theorem \ref{inductive expansion}. We also need the following lemma to derive the expression of $\trace^{n-1}(\partial_kD)$.

\begin{lemma} \label{l:doublesumidentity}
For any $a,b \geq 0$ we have 
\[
\sum_{i=0}^{a}\sum_{j=0}^{b}h_{a-i}h_{b-j}p_{i+j} = (b+1)h_ah_b + \sum_{i=1}^a(b-a+2i)h_{a-i}h_{b+i}
.\]
\end{lemma}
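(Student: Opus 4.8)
The plan is to prove the identity by passing to bivariate generating functions in formal variables $x,y$ with coefficients in $\Lambda$, using the series $H(t)=\sum_{k\ge0}h_kt^k$ together with the relation $H'(t)/H(t)=\sum_{k\ge1}p_kt^{k-1}$ established in the proof of Proposition \ref{ih_i}. Throughout I adopt the convention $p_0=1$, so that the $i=j=0$ term on the left reads $h_ah_b$ and Proposition \ref{ih_i} upgrades to $\sum_{j=0}^{i}h_{i-j}p_j=(i+1)h_i$; this upgraded form is exactly what the $a=0$ (equivalently $b=0$) case of the claimed identity reduces to, giving a clean base of comparison.

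First I would assemble the generating function of the left-hand side. Substituting $a=i+a'$ and $b=j+b'$ collapses the two $h$-factors and yields
\[
\sum_{a,b\ge0}\Bigl(\sum_{i=0}^a\sum_{j=0}^b h_{a-i}h_{b-j}p_{i+j}\Bigr)x^ay^b = H(x)H(y)\sum_{i,j\ge0}p_{i+j}x^iy^j.
\]
Writing $Q(t)=\sum_{m\ge0}p_mt^m=1+tH'(t)/H(t)$ and summing the geometric inner factor gives $\sum_{i,j\ge0}p_{i+j}x^iy^j=\bigl(xQ(x)-yQ(y)\bigr)/(x-y)$, so the entire left side becomes $G(x,y)=\bigl(H(y)\phi(x)-H(x)\phi(y)\bigr)/(x-y)$ with $\phi(t)=tH(t)+t^2H'(t)$. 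If one prefers to avoid the formal division by $x-y$, one keeps $\sum_{i+j=m}x^iy^j$ as an honest polynomial and never divides.

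Next I would compute the generating function of the right-hand side and show it equals the same $G(x,y)$. I rewrite the right side as $(a+1)h_ah_b+\sum_{i=0}^a(b-a+2i)h_{a-i}h_{b+i}$, pulling the exceptional $i=0$ coefficient $b+1$ out as an $(a+1)h_ah_b$ correction over the uniform coefficient $b-a$. After the reindexing $a'=a-i$, $b''=b+i$ (subject to the constraint $b''\ge i$), the uniform sum produces $\bigl(xyH'(x)H(y)-y^2H(x)H'(y)\bigr)/(x-y)$ once the two copies of $x^2H(x)H'(x)$ cancel, and the correction term contributes $H(y)\bigl(H(x)+xH'(x)\bigr)$; combining the two over the common denominator reproduces exactly $H(y)\phi(x)-H(x)\phi(y)$. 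Comparing coefficients of $x^ay^b$ then yields the identity.

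The main obstacle is bookkeeping rather than depth: keeping the $p_0=1$ convention consistent across both sides, correctly handling the shifted summation range $b''\ge i$ on the right, and verifying the cancellation that makes the two generating functions coincide. As a fallback, the recursion $\sum_{j=0}^{i}h_{i-j}p_j=(i+1)h_i$ can instead be applied once in the inner index of the left-hand double sum to set up an induction on $b$, which avoids generating functions entirely at the cost of a less transparent telescoping.
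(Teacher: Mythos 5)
Your proof is correct, but it takes a genuinely different route from the paper's. The paper argues entirely with finite sums: it splits off the $i=0$ term (which gives $(b+1)h_ah_b$), then for each fixed $i\geq 1$ views the inner sum $\sum_{j=0}^{b}h_{b-j}p_{i+j}$ as the truncation of $\sum_{m=1}^{b+i}h_{b+i-m}p_m=(b+i)h_{b+i}$ from Proposition \ref{ih_i}, subtracts the missing piece $\sum_{j=1}^{i-1}h_{b+i-j}p_j$, and after one more reindexing applies Proposition \ref{ih_i} to that piece as well; the three resulting terms telescope to the right-hand side. You instead encode both sides as bivariate generating functions and reduce the identity to an equality of closed-form expressions in $H$ and $H'$. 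I checked your key computations and they hold: the left side equals $\bigl(H(y)\phi(x)-H(x)\phi(y)\bigr)/(x-y)$ with $\phi(t)=tH(t)+t^2H'(t)$; in the uniform part of the right side the two copies of $x^2H(x)H'(x)$ do cancel, leaving $\bigl(xyH'(x)H(y)-y^2H(x)H'(y)\bigr)/(x-y)$; and adding the correction $H(y)\bigl(H(x)+xH'(x)\bigr)$ recovers the same rational expression. What your route buys: all $a,b$ are treated at once, the cancellation is visible at the level of closed forms, and the method extends readily to other convolution identities of this shape. What the paper's route buys: it is purely finitary, uses Proposition \ref{ih_i} as a black box, and avoids formal division by $x-y$ (which you rightly note can be sidestepped anyway). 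One point in your favor on rigor: you make the convention $p_0=1$ explicit, whereas the paper uses it silently in its very first step (the $i=0$ term equals $(b+1)h_ah_b$ only if $p_0=1$). Your suggested fallback—induction via the upgraded identity $\sum_{j=0}^{i}h_{i-j}p_j=(i+1)h_i$—is essentially the paper's actual argument in spirit.
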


\begin{proof}
First, breaking off the $i = 0$ term we can apply Proposition \ref{ih_i} to get 
\[
    \sum_{i=0}^{a}\sum_{j=0}^{b}h_{a-i}h_{b-j}p_{i+j} = (b+1)h_ah_b + \sum_{i=1}^{a}\sum_{j=0}^{b}h_{a-i}h_{b-j}p_{i+j}
.\]
For the remaining double sum, we repeatedly apply Proposition \ref{ih_i} for each $i$, subtracting off what's missing:
\begin{align*}
\sum_{i=1}^{a}\sum_{j=0}^{b}h_{a-i}h_{b-j}p_{i+j} &= \sum_{i=1}^{a}h_{a-i} \left((b+i)h_{b+i} - \sum_{j=1}^{i-1}h_{b+i-j}p_j\right) \\
&= \sum_{i=1}^a(b+i)h_{a-i}h_{b+i} - \sum_{i=1}^{a}\sum_{j=1}^{i-1}h_{b+i-j}p_j.
\end{align*}
After re-indexing appropriately, we can apply Proposition \ref{ih_i} again to get
\[
\sum_{i=1}^{a}\sum_{j=1}^{i-1}h_{b+i-j}p_j = \sum_{i=1}^{a-1}\sum_{j=1}^{a-i}h_{a-i-j}p_j = \sum_{i=1}^{a-1}(a-i)h_{b+i}h_{a-i} = \sum_{i=1}^{a}(a-i)h_{b+i}h_{a-i}. 
\]
Substituting this expression back in, we get the desired formula.
\end{proof}

\begin{prop} \label{exp of trace n-1}
If $D$ is a strand diagram consisting of a single size $n$ crossing, and $k \geq 0$, then 
\[
\trace^{n-1}(\partial_kD) = (n-2)!\sum_{j=0}^{k}h_{k-j}\left[
\sum_{i=2}^{n}(i-1)h_{n-i}\sstrand{i+j-1} + \sum_{i=1}^{n-1}\sum_{\ell=1}^{n-i}h_{n-\ell-i}p_{\ell+j}\sstrand{i-1}
\right] 
.\]
\end{prop}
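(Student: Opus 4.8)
The plan is to peel the problem down to a single dotted crossing and then evaluate the iterated partial trace through the cycle structure of its colorings. Write $D = C_n$ for the size-$n$ crossing and $D^m$ for $C_n$ carrying $m$ dots on its last strand. Since the trace is $\Lambda$-linear and $\partial_k D = \sum_{m=0}^{k} h_{k-m} D^m$, we have $\trace^{n-1}(\partial_k D) = \sum_{m=0}^{k} h_{k-m}\,\trace^{n-1}(D^m)$, and the asserted right-hand side is literally $\sum_{m=0}^{k} h_{k-m}$ times the bracketed expression evaluated at $j=m$. So it suffices to prove, for each $m \ge 0$,
\[
\trace^{n-1}(D^m) = (n-2)!\Big[\sum_{i=2}^{n}(i-1)h_{n-i}\sstrand{m+i-1} + \sum_{i=1}^{n-1}\sum_{\ell=1}^{n-i}h_{n-\ell-i}p_{\ell+m}\sstrand{i-1}\Big].
\]

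The crux is to read $\trace^{n-1}(D^m)$ off the colorings. A coloring of $C_n$ is an arbitrary permutation $\sigma \in S_n$ (the position constraint in the proof of Lemma~\ref{lem:strand} is vacuous for the crossing $[1,n]$), and the $m$ dots on strand $n$ augment by $m$ the index attached to the cycle of $\sigma$ through $n$. Following the mechanism in the proof of Lemma~\ref{trn=ch}, each of the $n-1$ partial traces resolves one of the strands $n, n-1, \dots, 2$: a cycle is "closed off" into a power-sum factor at the moment its last strand is removed, while the cycle containing strand $1$ is never closed and survives as a single decorated strand. I would prove by this bookkeeping that
\[
\trace^{n-1}(D^m) = \sum_{\sigma \in S_n} \Big(\prod_{c \not\ni 1} p_{|c| + m\,[n \in c]}\Big)\, \sstrand{(|c_1|-1) + m\,[n \in c_1]},
\]
where the product runs over cycles $c$ of $\sigma$ avoiding $1$ and $c_1$ is the cycle through $1$; the accumulated $+1$ at each closing and the extra $+m$ on the cycle through $n$ are exactly the two contributions in Definition~\ref{tr(D)}.

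I would then evaluate this sum by splitting on whether $n$ lies in the cycle $c_1$ of $1$. If $n \in c_1$, set $|c_1| = i \ge 2$: choosing the $i-2$ remaining elements of $c_1$ from $\{2,\dots,n-1\}$, arranging $c_1$ as a cycle, and letting the other $n-i$ elements range over all permutations (whose power sums total $(n-i)!\,h_{n-i}$ by Proposition~\ref{n!h_n}), the binomial and cyclic counts collapse to $(n-2)!\,(i-1)$, producing the $p$-free family $\sstrand{m+i-1}$. If $n \notin c_1$, set $|c_1| = i$ and let $\ell$ be the length of the cycle through $n$; recording that cycle as the factor $p_{\ell+m}$ and again applying Proposition~\ref{n!h_n} to the $n-i-\ell$ free elements, the counts collapse to $(n-2)!$ and yield the double-sum family $h_{n-\ell-i}\,p_{\ell+m}\sstrand{i-1}$. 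Summing the two cases gives the displayed formula for $\trace^{n-1}(D^m)$, and reinserting $\sum_m h_{k-m}$ finishes the proof.

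The main obstacle is the second paragraph: making the passage from the iterated trace to the cycle-indexed sum fully rigorous. One must verify that peeling strands $n$ down to $2$ closes precisely the cycles avoiding $1$, that the dot counts accumulate correctly (each closing adds $1$, the marked strand $n$ adds $m$ to its cycle), and that nothing is double counted; this needs a careful induction on the number of traces, grounded in the linear recursion behind Lemma~\ref{trn=ch}. I expect the cycle-counting route to be the cleaner one, with Proposition~\ref{n!h_n} doing all of the arithmetic. An alternative, more computational route peels a single strand at a time and must track diagrams with dots on interior (non-surviving) strands; there the contributions reorganize into sums of the shape $\sum_{i}\sum_{j} h_{a-i}h_{b-j}p_{i+j}$, and Lemma~\ref{l:doublesumidentity} (together with Proposition~\ref{ih_i}) is exactly the identity needed to rewrite them in the stated closed form.
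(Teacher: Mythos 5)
Your proposal is correct and follows essentially the same route as the paper's own proof: both reduce by $\Lambda$-linearity to computing $\trace^{n-1}(D^m)$ for each $m$, identify the terms of the iterated trace with permutations $\sigma \in S_n$ split according to whether $n$ lies in the cycle of $1$, and then collapse the cycle counts via Proposition \ref{n!h_n} into the coefficients $(n-2)!\,(i-1)h_{n-i}$ and $(n-2)!\,h_{n-\ell-i}p_{\ell+m}$. The trace-to-cycle correspondence that you flag as the main obstacle is asserted in the paper at the same level of rigor (justified by the mechanism of Definition \ref{tr(D)} and Lemma \ref{trn=ch}), so your argument matches the published one in both structure and completeness.
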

\begin{proof}
We proceed by calculating $\trace^{n-1}(D^j)$ for each $j$, and then substituting into the expression for $\partial_k D$. After taking $n-1$ traces, there will be $n!$ terms, each corresponding to a permutation $\sigma \in S_n$ in the following way:
\begin{enumerate}
    \item If $1$ and $n$ are in the same $i$-cycle, and the remaining cycles have type $\lambda$, the corresponding term will be $p_{\lambda}\sstrand{i+j-1}$.
    \item If $1$ is in an $i$-cycle, and $n$ is in a disjoint $\ell$-cycle, and the remaining cycles have type $\lambda$, the corresponding term will be $p_\lambda p_{\ell+j}\sstrand{i-1}$.
\end{enumerate}
We will count the number of times each of these cases occur and simplify the resulting expression. In the first case, there are $i-1$ choices for where $n$ appears in the cycle relative to $1$, $\binom{n-2}{i-2}$ ways to pick the remaining elements of the cycle, and $(i-2)!$ ways to arrange those elements in the cycle. Since we can freely permute the remaining $n-i$ elements of $[n]$, the coefficient on $\sstrand{i+j-1}$ in the sum will be 
\[
\frac{(n-2)!}{(n-i)!}(i-1)\sum_{\sigma \in S_{n-i}}p_{\cycletype(\sigma)} = (n-2)!(i-1)h_{n-i}
\]
by applying Proposition \ref{n!h_n}. In the second case, if $1$ is in an $i$-cycle and $n$ is in a disjoint $\ell$-cycle, similar enumerative arguments to the previous case tell us there are $\binom{n-2}{i-1}(i-1)! = \frac{(n-2)!}{(n-i-1)!}$ $i$-cycles containing $1$ but not $n$ and $\binom{n-i-1}{\ell-1}(\ell-1)! = \frac{(n-i-1)!}{(n-i-\ell)!}$ $a$-cycles containing $n$ from the remaining elements of $[n]$. Once again we can freely permute the leftover elements, so the coefficient of $\sstrand{i-1}$ will be 
\[
\frac{(n-2)!}{(n-i-1)!}\sum_{\ell=1}^{n-i} \frac{(n-i-1)!}{(n-i-\ell)!}p_{a+j} \sum_{\sigma \in S_{n-i-\ell}} p_{\cycletype(\sigma)} = (n-2)!\sum_{\ell=1}^{n-i}p_{\ell+j}h_{n-\ell-i}
.\]
Thus if we sum over the possible $i$ in each case, we have 
\[
\trace^{n-1}(D^j) =
(n-2)!\left[\sum_{i=2}^{n}(i-1)h_{n-i}\sstrand{i+j-1} + \sum_{i=1}^{n-1}\sum_{\ell=1}^{n-i}h_{n-\ell-i}p_{\ell+j}\sstrand{i-1}\right].
\]
Summing over $j$ as in the definition of $\partial_k D$ gives the desired expression.
\end{proof}

\begin{thm}\label{inductive expansion}
Let $n \geq 1$, $k \geq 0$. If $D$ is a strand diagram consisting of a single size $n$ crossing, then 
\[
\trace^{n-1}(\partial_kD) = (n-2)!\sum_{i=2}^{n}(i-1)h_{n-i}\partial_{k+i-1}\sstrand{} + (k+i-n)h_{k+i-1}\partial_{n-i}\sstrand{}
.\]
\end{thm}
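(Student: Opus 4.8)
The plan is to start from the explicit formula for $\trace^{n-1}(\partial_k D)$ provided by Proposition \ref{exp of trace n-1} and transform it into the claimed $\partial$-positive form (I read the right-hand side as $(n-2)!\sum_{i=2}^n[(i-1)h_{n-i}\partial_{k+i-1}\sstrand{} + (k+i-n)h_{k+i-1}\partial_{n-i}\sstrand{}]$, the bracket being summed as a whole). Write that formula as $(n-2)!(A+B)$, where $A = \sum_{j=0}^k h_{k-j}\sum_{i=2}^n (i-1)h_{n-i}\sstrand{i+j-1}$ collects the dotted-strand terms and $B = \sum_{j=0}^k h_{k-j}\sum_{i=1}^{n-1}\sum_{\ell=1}^{n-i}h_{n-\ell-i}p_{\ell+j}\sstrand{i-1}$ collects the power-sum terms. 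Throughout I use the convention $p_0 = 1$ that is implicit in Lemma \ref{l:doublesumidentity} and in the $i=0$ step of its proof; in particular $\sum_{j=0}^k h_{k-j}p_j = (k+1)h_k$ by Proposition \ref{ih_i}.

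First I would rewrite $A$. Substituting $m = i+j-1$ turns the inner sum into $\sum_{m=i-1}^{k+i-1}h_{k+i-1-m}\sstrand{m}$, which is exactly $\partial_{k+i-1}\sstrand{} = \sum_{m=0}^{k+i-1}h_{k+i-1-m}\sstrand{m}$ minus the tail $\sum_{m=0}^{i-2}h_{k+i-1-m}\sstrand{m}$. Hence $A = \sum_{i=2}^n (i-1)h_{n-i}\partial_{k+i-1}\sstrand{} - A_{\mathrm{rem}}$, where the first summand is precisely the first group of target terms and $A_{\mathrm{rem}} = \sum_{i=2}^n(i-1)h_{n-i}\sum_{m=0}^{i-2}h_{k+i-1-m}\sstrand{m}$ is a leftover. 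Next I would clear the power sums from $B$: the coefficient of $\sstrand{i-1}$ is $\sum_{j=0}^k\sum_{\ell=1}^{n-i}h_{k-j}h_{n-i-\ell}p_{j+\ell}$, and I would apply Lemma \ref{l:doublesumidentity} with $a=k$, $b=n-i$ after extending the $\ell$-sum to $\ell=0$ and subtracting the extra term $(k+1)h_k h_{n-i}$. The lemma's output minus this correction collapses (the surviving constant $(n-i-k)h_kh_{n-i}$ is the $s=0$ term of the sum), giving the $p$-free coefficient $\sum_{s=0}^k(n-i-k+2s)h_{k-s}h_{n-i+s}$.

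Finally I would show $B - A_{\mathrm{rem}}$ equals the second target group $\sum_{i=2}^n(k+i-n)h_{k+i-1}\partial_{n-i}\sstrand{}$ by comparing coefficients of each $\sstrand{m}$, $0\le m\le n-2$. After the substitutions $t=k-s$ in $B$, $u=n-i$ in $A_{\mathrm{rem}}$, and $v=n-i-m$ in the target, every coefficient becomes a sum of the single shape $\sum_r c_r\,h_r h_{M-r}$ along a common degree $M = n+k-1-m$: the $B$-coefficient is $\sum_{r=0}^k (M-2r)h_r h_{M-r}$, while the $A_{\mathrm{rem}}$- and target-coefficients carry weights $(n-r-1)$ and $(k-m-r)$ that add to $M-2r$, so together they give $\sum_{r=0}^{n-m-2}(M-2r)h_r h_{M-r}$. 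The proof then reduces to the single identity
\[
\sum_{r=0}^{k}(M-2r)h_r h_{M-r} = \sum_{r=0}^{n-m-2}(M-2r)h_r h_{M-r},
\]
which follows from antisymmetry: the substitution $r\mapsto M-r$ flips the sign of $M-2r$ and fixes $h_r h_{M-r}$, so $\sum_{r=0}^M (M-2r)h_r h_{M-r}=0$; since $n-m-2 = M-k-1$, the ranges $[0,n-m-2]$ and $[M-k,M]$ partition $[0,M]$, the latter maps to $[0,k]$ under $r\mapsto M-r$ with a sign flip, and the vanishing of the full sum forces the two truncations to agree.

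The main obstacle is the index bookkeeping in this last step: arranging the three families of coefficients so they all live on the same degree-$M$ ladder $h_r h_{M-r}$, and tracking the boundary corrections when invoking Lemma \ref{l:doublesumidentity} (the $\ell=0$ term and the $p_0=1$ convention). Once everything is on the common ladder, the collapse is driven entirely by the elementary antisymmetry $\sum_{r=0}^M (M-2r)h_r h_{M-r}=0$; the weight $(k+i-n)$ in the target emerges precisely as the mismatch between the truncation $[0,k]$ from $B$ and the truncation $[0,n-m-2]$ from $A_{\mathrm{rem}}$. I would also record the small base cases (the outer sum is empty for $n=1$, and a direct check for $n=2$ confirms the identity) to pin down the conventions.
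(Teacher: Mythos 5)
Your proposal is correct, and its skeleton matches the paper's: both start from Proposition \ref{exp of trace n-1}, split the expression into the dotted part $A$ and the power-sum part $B$, peel off $\sum_{i=2}^n(i-1)h_{n-i}\partial_{k+i-1}\sstrand{}$ from $A$ leaving exactly your remainder $A_{\mathrm{rem}}$, and clear the power sums from $B$ via Lemma \ref{l:doublesumidentity} (your reading of the right-hand side, with the bracket summed as a whole over $i$, is also the paper's). Where you genuinely diverge is in the parameter choice for the lemma and the resulting endgame. The paper takes $a=n-i$, $b=k$, which writes the $\sstrand{i-1}$-coefficient of $B$ as $\sum_{j=1}^{n-i}(k-n+i+2j)h_{n-i-j}h_{k+j}$ --- a sum over precisely the same monomials and index set as appear in the expansion of $\sum_{i}(k+i-n)h_{k+i-1}\partial_{n-i}\sstrand{}$ --- so that the difference can be computed termwise and matched with the remainder by one change of variables, with no further identity needed. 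You instead take $a=k$, $b=n-i$, which puts the $B$-coefficient on the degree-$M$ ladder as the truncation $\sum_{r=0}^{k}(M-2r)h_rh_{M-r}$, while $A_{\mathrm{rem}}$ plus the target gives the truncation $\sum_{r=0}^{n-m-2}(M-2r)h_rh_{M-r}$; reconciling the two ranges then requires your antisymmetry observation $\sum_{r=0}^{M}(M-2r)h_rh_{M-r}=0$ together with $n-m-2=M-k-1$, a step absent from (and unnecessary in) the paper. Both routes are valid; yours costs this extra involution argument but makes explicit \emph{why} the possibly-negative coefficient $(k+i-n)$ appears --- it is the mismatch between two truncations of an antisymmetric sum --- whereas the paper's parameter choice aligns the index ranges from the outset and keeps the bookkeeping shorter. (The degenerate case $n=1$, where $(n-2)!$ is undefined, is an issue shared by the statement itself and by both arguments; your suggestion to treat small $n$ separately is a sensible patch.)
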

\begin{proof}
First note that the $\trace^{n-1}(\partial_k D)$ here indeed expands positively in $\partial_{i}\sstrand{}$'s. For $n,k\geq 0$, if there exists an integer $i_1\in [2,n]$ such that $k+i_1-n<0$, we can find an integer $i_2\in [2,n]$ such that $i_2=-(k+i_1-n)+1$, and then $h_{n-i_2}\partial_{k+i_2-1}\sstrand{}=h_{k+i_1-1}\partial_{n-i_1}\sstrand{}$. Thus the negative term $(k+i_1-n)h_{k+i_1-1}\partial_{n-i_1}\sstrand{}$ will be canceled out with the positive term $(i_2-1)h_{n-i_2}\partial_{k+i_2-1}\sstrand{}$. Next we will show that this expansion in $\partial_{i}\sstrand{}$'s agrees with the $\trace^{n-1}(\partial_k D)$ in Proposition \ref{exp of trace n-1}.

Since both the expression in the previous proposition and the conjectured formula have a constant $(n-2)!$ out front, we will omit it in the following computation. \\
For notational convenience, let 
\begin{align*}
(A) &= \sum_{j=0}^{k}\sum_{i=2}^{n}(i-1)h_{k-j}h_{n-i}\sstrand{i+j-1}& 
(B) &= \sum_{j=0}^{k}\sum_{i=1}^{n-1}\sum_{\ell=1}^{n-i}h_{k-j}h_{n-\ell-i}p_{\ell+j}\sstrand{i-1}\\
(C) &= \sum_{i=2}^{n}(i-1)h_{n-i}\partial_{k+i-1}\sstrand{} &
(D) &= \sum_{i=2}^{n}(k+i-n)h_{k+i-1}\partial_{n-i}\sstrand{}
\end{align*}
so we want to show $(A) + (B) = (C) + (D)$. First, we simplify $(B)$ using Lemma \ref{l:doublesumidentity}:
\begin{align*}
    (B) &= \sum_{i=1}^{n-1}\sum_{j=0}^{k}\sum_{\ell=1}^{n-i}h_{k-j}h_{n-\ell-i}p_{\ell+j}\sstrand{i-1} \\
    &= \sum_{i=1}^{n-1}\sum_{j=1}^{n-i}(k-n+i+2j)h_{n-i-j}h_{k+j}\sstrand{i-1}
\end{align*}
where we are taking $a = n-i$, $b = k$ and ignoring the $\ell = 0$ term. On the other hand, 
\begin{align*}
    (D) &= \sum_{i=2}^{n}(k+i-n)h_{k+i-1}\partial_{n-i}\sstrand{} \\
    &= \sum_{i=2}^{n}\sum_{j=0}^{n-i}(k+i-n)h_{k+i-1}h_{n-i-j}\sstrand{j} \\
    &= \sum_{i=1}^{n-1}\sum_{j=1}^{n-i}(k+j+1 -n)h_{k+j}h_{n-i-j}\sstrand{i-1}
\end{align*}
The last equality follows from re-indexing $j \to i -1$, $i \to j + 1$. Noting that $(B)$ and $(D)$ have exactly the same terms (just with different coefficients), we have
\[
(B) = (D) + \sum_{i=1}^{n-1}\sum_{j=1}^{n-i}(i+j-1)h_{n-i-j}h_{k+j}\sstrand{i-1}
\]
Next, we have
\begin{align*}
    (C) &= \sum_{i=2}^n\sum_{j=0}^{k+i-1}(i-1)h_{n-i}h_{k+i+j-1}\sstrand{j}
\end{align*}
So,
\begin{align*}
    (A) &= \sum_{i=2}^{n}\sum_{j=0}^{k}(i-1)h_{k-j}h_{n-i}\sstrand{i+j-1} \\
    &= \sum_{i=2}^{n}\sum_{j=i-1}^{k+i-1}(i-1)h_{n-i}h_{k-j+i-1}\sstrand{j} \\
    &= (C) - \sum_{i=2}^n\sum_{j=0}^{i-2}(i-1)h_{n-i}h_{k+i-j-1}\sstrand{j}
\end{align*}
Finally,
\[
\sum_{i=1}^{n-1}\sum_{j=1}^{n-i} (i+j-1)h_{n-i-j}h_{k+j}\sstrand{i-1}  = \sum_{i=2}^{n}\sum_{j=0}^{i-2}(i-1)h_{n-i}h_{k+i-j-1}\sstrand{j} 
\]
by the change of variables $(i \to j + 1, j \to i-j -1)$. Thus $(A) + (B) = (C) + (D)$, as desired.
\end{proof}

Lastly, we can prove Theorem \ref{hpos for 211} using an inductive argument on strand diagrams.
\begin{proof}[Proof of Theorem \ref{hpos for 211}]
    Let $P(\lambda)$ be a {$\bf 2 + 1 +1$}-avoiding unit interval order, and let $D$ be its corresponding strand diagram with top-most crossing of size $m$. Let $D'$ be a strictly smaller strand diagram with the top-most crossing of $D$ removed. Since any two crossings are only connected by the right-most strand of the lower crossing, $\trace^{m-1}(D)$ will result in a sum of weighted $D'$ that have dots \emph{only} on their right-most strand. Thus, using Theorem \ref{inductive expansion} and viewing the single strand $\sstrand{}$ in the expression as the rightmost strand of $D'$, $\trace^{m-1}(\partial_k D)$ gives exactly the same $h$-positive combination of $\partial_i\sstrand{}$'s with $\sstrand{}$ replaced by $D'$. We continue to take traces on all $\partial_iD'$'s. By induction, any strand diagram with arbitrarily many crossings can be reduced to an $h$-positive combination of $\partial_i\sstrand{}$'s, yielding the corresponding symmetric function $\ch(\Gamma_{\lambda})$ by Lemma \ref{trn=ch}. It is clear that $\partial_i\sstrand{}$ is $h$-positive for any $i$ since $\trace(\partial_i \sstrand{}) = (i+1)h_{i+1}$ by Proposition \ref{ih_i}, which completes the proof.
\end{proof}

\section{A generalization of Stanley-Stembridge conjecture}

The following is a generalization of Stanley-Stembridge conjecture.

\begin{conj} \label{conj:gen}
Let $D$ be an arbitrary strand diagram on $n$ strands, obtained by concatenating several crossings. Let $\mathcal D$ range over all corresponding colored strand diagrams.  Then $\sum_{\sigma_{\mathcal D}}p_{\cycletype(\sigma_{\mathcal D})}$ is $h$-positive. 
\end{conj}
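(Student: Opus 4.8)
The plan is to run the same diagrammatic induction that proves Theorem~\ref{hpos for 211}, now on an arbitrary concatenation of crossings rather than only on the staircase-like diagrams arising from $P(\lambda)$. Write $D$ as a concatenation of crossings $C_1,\dots,C_\ell$ read from bottom to top, and let $m$ be the size of the top-most crossing $C_\ell$. The target quantity $\sum_{\sigma_{\mathcal D}}p_{\cycletype(\sigma_{\mathcal D})}$ is exactly $\trace^n(D)$ under the coloring-to-permutation correspondence: this is Lemma~\ref{trn=ch} in the staircase case, and the identical closure argument lets one \emph{define} $\trace^n(D)$ for any $D$. So first I would extend Definition~\ref{tr(D)} to general diagrams; removing the last strand and either multiplying by a power sum or depositing $a_n+1$ dots on a strand of the top crossing of $D'$ makes sense verbatim, so $\partial_k D$ and its traces remain well defined. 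The goal would then be an analogue of Theorem~\ref{inductive expansion}: that $\trace^{m-1}(\partial_k D)$ is an $h$-positive $\Lambda$-combination of terms $\partial_i D'$, where $D'$ is $D$ with $C_\ell$ removed, so that crossings can be peeled off one at a time until one bottoms out at the single-strand base case $\trace(\partial_i \sstrand{}) = (i+1)h_{i+1}$.

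The key steps, in order, would be: (1) compute $\trace^{m-1}(\partial_k D)$ by the same enumeration of permutations of the strands of $C_\ell$ as in Proposition~\ref{exp of trace n-1}, now tracking where the newly closed dots are deposited on $D'$; (2) re-collect the resulting weighted-$D'$ terms into $\partial_i D'$'s using the double-sum identity of Lemma~\ref{l:doublesumidentity} and Proposition~\ref{ih_i}; and (3) verify nonnegativity of the coefficients after the cancellation of the negative $(k+i-m)h_{k+i-1}$-type terms against positive ones, exactly as in Theorem~\ref{inductive expansion}. Granting (1)--(3), the induction on the number of crossings closes at once and yields $h$-positivity of $\sum_{\sigma_{\mathcal D}}p_{\cycletype(\sigma_{\mathcal D})}$ in general, recovering $\ch(\Gamma_\lambda)=\omega(X_G)$ in the staircase case.

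The hard part --- and the reason this is stated only as a conjecture --- is steps (1) and (3), and it is a genuine obstacle rather than a routine computation. The proof of Theorem~\ref{hpos for 211} crucially uses that \emph{any two crossings are only connected by the right-most strand of the lower crossing}, which is special to the ${\bf 2}+{\bf 1}+{\bf 1}$-avoiding case: it guarantees that tracing out $C_\ell$ deposits dots \emph{only} on the right-most strand of $D'$, so the output is literally a combination of $\partial_i D'$'s. For a general concatenation --- indeed already for a general staircase-like $D$, where $C_\ell=[i_\ell,j_\ell]$ can overlap the crossing below it in the whole block of strands $[i_\ell, j_{\ell-1}]$ --- the closure deposits dots on interior strands of $D'$, the resulting weighted diagrams are no longer of the form $(D')^i$, and the clean inductive shape collapses. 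One would first have to enlarge the class of admissible weightings (dots on several strands) and find the right generalization of $\partial_k$ for which a closed trace formula still holds.

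Even granting a structural fix, the positivity in (3) cannot come from a purely local cancellation as in Theorem~\ref{inductive expansion}. The staircase-like diagrams are precisely the $P(\lambda)$'s, for which $\sum_{\sigma_{\mathcal D}}p_{\cycletype(\sigma_{\mathcal D})}=\ch(\Gamma_\lambda)=\omega(X_G)$ by Lemma~\ref{lem:strand}; hence Conjecture~\ref{conj:gen} already contains the full Stanley-Stembridge conjecture (Conjecture~\ref{3+1pos}) as the special case of staircase-like $D$, over \emph{all} $\lambda\subset\stair(n)$ and not merely the ${\bf 2}+{\bf 1}+{\bf 1}$-avoiding ones. Any proof of Conjecture~\ref{conj:gen} would therefore resolve the open Stanley-Stembridge conjecture, so the realistic expectation is that the slick single-crossing cancellation does \emph{not} survive to general diagrams. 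The entire difficulty is to find the combinatorial mechanism --- presumably some global, $h$-positive refinement of the partial-trace recursion --- that replaces the accidental local cancellation exploited in the ${\bf 2}+{\bf 1}+{\bf 1}$-avoiding case.
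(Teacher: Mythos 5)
You correctly recognized that this statement is a conjecture: the paper offers no proof of it, and none should be expected, since (as you note) by Lemma \ref{lem:strand} the staircase-like case of Conjecture \ref{conj:gen} already encompasses $\ch(\Gamma_\lambda)=\omega(X_G)$ for every $\lambda\subset\stair(n)$, i.e., the full open Stanley-Stembridge conjecture. Your diagnosis of where the paper's inductive engine breaks is also accurate: the proof of Theorem \ref{hpos for 211} uses that consecutive crossings meet only in the right-most strand of the lower crossing --- which is exactly the ${\bf 2}+{\bf 1}+{\bf 1}$-avoiding condition --- so that $\trace^{m-1}(\partial_k D)$ lands back in the span of the $\partial_i D'$'s; for a general concatenation (indeed already for a general staircase-like diagram) the closure deposits dots on interior strands of $D'$, and the recursion of Theorem \ref{inductive expansion} has no direct analogue.

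What your analysis misses is the one piece of mathematics the paper does attach to this conjecture: a conditional proof (the closing lemma of the paper) that Conjecture \ref{conj:gen} follows from Haiman's conjecture on monomial-character positivity of Kazhdan-Lusztig basis elements \cite{haiman1993hecke}. The mechanism is representation-theoretic rather than diagrammatic: each crossing $[i_k,j_k]$ corresponds to the Kazhdan-Lusztig basis element $C'_{w_0(i_k,j_k)}$ for the longest element of the parabolic subgroup acting on strands $i_k$ through $j_k$; since any product of Kazhdan-Lusztig basis elements decomposes with nonnegative coefficients into Kazhdan-Lusztig basis elements \cite{springer1981quelques, beilinson2018faisceaux}, positivity of the relevant characters on single elements $C'_w$ would imply it for the product encoding $D$, i.e., $h$-positivity of $\sum_{\sigma_{\mathcal D}}p_{\cycletype(\sigma_{\mathcal D})}$. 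So the route the paper points to for this conjecture bypasses the trace recursion entirely and goes through Kazhdan-Lusztig theory; your proposed repair --- enlarging the class of admissible weightings and finding the right generalization of $\partial_k$ --- is a genuinely different, and still open, line of attack.
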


\begin{example}
    Figure \ref{ex_gen} illustrates a strand diagram $D$ of size 4, whose associated symmetric function can be obtained from all its corresponding colored strand diagrams $\mathcal{D}$ such that \[\sum_{\sigma_{\mathcal D}}p_{\cycletype(\sigma_{\mathcal D})}=2p_{1111}+6p_{211}+4p_{31}+2p_{22}+2p_4=4h_{22}+4h_{31}+8h_4.\]
\end{example}

\begin{figure}[h]
    \centering
    \begin{tikzpicture}
\begin{knot}
\strand (-1,1) -- (0,0) --(1,-1);
\strand (0,1) -- (-1,0) --(-1,-1);
\strand (1,1) -- (2,0)--(2,-1);
\strand (2,1) -- (1,0)--(0,-1);
\strand (-1,1) -- (-1,2);
\strand (0,1) -- (1,2);
\strand (1,1) -- (0,2);
\strand (2,1) -- (2,2);
\node (1) at (-1,2.2) {1};
\node (2) at (0,2.2) {2};
\node (3) at (1,2.2) {3};
\node (4) at (2,2.2) {4};
\node (5) at (-1,-1.2) {1};
\node (6) at (0,-1.2) {2};
\node (7) at (1,-1.2) {3};
\node (8) at (2,-1.2) {4};
\end{knot}
\end{tikzpicture}
    \caption{An example of strand diagram $D$ in Conjecture \ref{conj:gen}.}
    \label{ex_gen}
\end{figure}

Stanley-Stembridge conjecture can be viewed as a special case for staircase-like strand diagrams. 

\begin{lemma}
Conjecture \ref{conj:gen} would follow from  \cite[Conjecture 2.1]{haiman1993hecke}.
\end{lemma}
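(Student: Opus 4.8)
The plan is to translate the diagrammatic sum into a trace evaluated on a product of Kazhdan--Lusztig basis elements of the Hecke algebra, and then feed the result to Haiman's conjecture, which we are allowed to assume. First I would record the combinatorial dictionary. A crossing $[i,j]$ of $D$ realizes, under all colorings, every permutation of the block $\{i,i+1,\dots,j\}$; hence a coloring $\mathcal D$ amounts to choosing, for each crossing $C_k=[i_k,j_k]$, an arbitrary element $w_k$ of the parabolic subgroup $W_{J_k}=S_{\{i_k,\dots,j_k\}}\le S_n$, and $\sigma_{\mathcal D}$ is the product $w_1w_2\cdots w_\ell$ taken in the order the crossings are concatenated. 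Consequently the number of colorings producing a given $w\in S_n$ is exactly the coefficient of $w$ in the group-algebra element $E:=E_{J_1}E_{J_2}\cdots E_{J_\ell}$, where $E_J:=\sum_{w\in W_J}w$. Letting $\Phi:\mathbb{Q}[S_n]\to\Lambda$ be the linear map $w\mapsto p_{\cycletype(w)}$, this yields the identity $\sum_{\sigma_{\mathcal D}}p_{\cycletype(\sigma_{\mathcal D})}=\Phi(E)$, which is the content special to our setup.

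Next I would lift this to generic $q$. In the Hecke algebra $H_n(q)$ the Kazhdan--Lusztig basis element $C'_{w_0(J)}(q)$ attached to the longest element of a parabolic $W_J$ specializes at $q=1$ to $E_J$ exactly (the normalization scalar $q^{-\ell(w_0(J))/2}$ tends to $1$, and $P_{y,w_0(J)}=1$ for all $y\in W_J$). I would then define the symmetric-function valued trace $\Phi_q(C'_w(q)):=\sum_{\mu\vdash n}\chi^\mu_q\big(C'_w(q)\big)\,s_\mu$, where $\chi^\mu_q$ are the irreducible characters of $H_n(q)$; extended linearly in the Kazhdan--Lusztig basis this is well defined, and since $\chi^\mu_1=\chi^\mu$ and $\sum_\mu\chi^\mu(w)s_\mu=p_{\cycletype(w)}$ we get $\Phi_q\big|_{q=1}=\Phi$.

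The two remaining inputs are both positivity statements. The product $C'_{w_0(J_1)}(q)\cdots C'_{w_0(J_\ell)}(q)=\sum_w m_w(q)\,C'_w(q)$ has structure constants $m_w(q)$ with nonnegative coefficients, by positivity of Kazhdan--Lusztig structure constants (available in finite type, and in general by Elias--Williamson \cite{elias2016soergel}). Haiman's Conjecture 2.1 \cite{haiman1993hecke} supplies, for every single $w$, the $h$-positivity of $\Phi_q(C'_w(q))$. Combining these, $\Phi_q\big(\prod_k C'_{w_0(J_k)}(q)\big)=\sum_w m_w(q)\,\Phi_q(C'_w(q))$ is a nonnegative combination of $h$-positive symmetric functions, hence $h$-positive; specializing $q\to 1$ identifies the left side with $\Phi(E)=\sum_{\sigma_{\mathcal D}}p_{\cycletype(\sigma_{\mathcal D})}$ and preserves $h$-positivity. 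I would emphasize that the staircase-like hypothesis is never used: it only constrains which products $E$ arise, whereas this argument runs over arbitrary products of parabolic top-elements, which is precisely why the same hypothesis yields the more general Conjecture \ref{conj:gen}.

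The main obstacle, and the step deserving the most care, is the first one: verifying that the coloring multiplicities are captured exactly by the ordered group-algebra product $E_{J_1}\cdots E_{J_\ell}$, so that no colorings are over- or under-counted and the noncommutativity of the product is harmless because $\Phi$ reads off only cycle types. The secondary technical point is pinning down the precise form of Haiman's conjecture so that it delivers $h$-positivity of $\Phi_q(C'_w(q))$ rather than merely Schur-positivity, and confirming that the $q\to1$ specialization commutes with this trace. The normalization scalars relating $C'_{w_0(J)}(1)$ to $E_J$ and the passage $q\to1$ are positive and therefore do not interfere; everything downstream of the dictionary is a formal assembly of the two positivity inputs.
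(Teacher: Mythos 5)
Your proposal is correct and follows essentially the same route as the paper's proof: identify each crossing $[i_k,j_k]$ with the Kazhdan--Lusztig basis element $C'_{w_0(i_k,j_k)}$ of the longest element of the corresponding parabolic subgroup, invoke positivity of Kazhdan--Lusztig structure constants to decompose the product of these elements positively in the KL basis, and then apply Haiman's Conjecture 2.1 to each single $C'_w$. The paper states this in three sentences, taking the coloring-to-group-algebra dictionary and the $q=1$ specialization as implicit, whereas you spell both out; the substance is identical.
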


\begin{proof}
Each crossing $C_k = [i_k,j_k]$ corresponds to the Kazhdan-Lusztig basis element $C'_{w_0(i_k,j_k)}$ for the longest element $w_0(i_k,j_k)$ of the parabolic subgroup acting on strands $i_k$ through $j_k$. It is known that any product of Kazhdan-Lusztig basis elements decomposes positively into a sum of Kazhdan-Lusztig basis elements, see \cite{springer1981quelques, beilinson2018faisceaux}. Thus positivity of monomial characters on single $C'_w$-s, as conjectured in \cite[Conjecture 2.1]{haiman1993hecke}, would imply positivity on any such product of crossings. 
\end{proof}

\bibliographystyle{amsplain}
\bibliography{refs}
\end{document}